\documentclass[11pt]{amsart}

\usepackage{amsmath,amsthm,amssymb}
\usepackage{mathrsfs}
\usepackage{enumerate}
\usepackage{booktabs}
\usepackage{array}
\usepackage{multirow}
\usepackage{graphicx}

\theoremstyle{plain}
\newtheorem{theorem}{Theorem}[section]
\newtheorem{lemma}[theorem]{Lemma}
\newtheorem{proposition}[theorem]{Proposition}
\newtheorem{corollary}[theorem]{Corollary}

\theoremstyle{definition}
\newtheorem{definition}[theorem]{Definition}
\newtheorem{example}[theorem]{Example}
\newtheorem{algorithm1}[theorem]{Algorithm}
\newtheorem{remark}[theorem]{Remark}

\newcommand{\R}{\mathbb{R}}

\newcommand{\at}{\tilde{a}}
\newcommand{\cl}{\operatorname{cl}}
\newcommand{\inte}{\operatorname{int}}
\newcommand{\bnd}{\operatorname{bnd}}
\newcommand{\apr}{\operatorname{apr}}

\begin{document}

\title[Fuzzy Aura Topological Spaces]{Fuzzy Aura Topological Spaces with Applications to Rough Set Theory and Medical Decision Making}

\author{Ahu A\c{c}{\i}kg\"{o}z}
\address{Department of Mathematics, Bal{\i}kesir University, \c{C}a\u{g}{\i}\c{s} Campus, 10145 Bal{\i}kesir, Turkey}
\email{ahuacikgoz@balikesir.edu.tr}

\subjclass[2020]{54A40, 54A05, 03E72, 54C08, 54D10, 90B50}

\keywords{Fuzzy aura topological space; fuzzy scope function; fuzzy \v{C}ech closure; generalized fuzzy open sets; separation axioms; fuzzy rough set; multi-criteria decision making; medical diagnosis}

\begin{abstract}
We introduce the concept of a fuzzy aura topological space $(X, \tilde{\tau}, \at)$, obtained by equipping a Chang-type fuzzy topological space $(X, \tilde{\tau})$ with a fuzzy scope function $\at : X \to \tilde{\tau}$ satisfying $\at(x)(x) = 1$ for every $x \in X$. This framework generalizes the recently introduced (crisp) aura topological spaces to the fuzzy setting. We define the fuzzy aura-closure operator $\cl_{\at}$ and the fuzzy aura-interior operator $\inte_{\at}$, and prove that $\cl_{\at}$ is a fuzzy additive \v{C}ech closure operator whose transfinite iteration yields a fuzzy Kuratowski closure. Five classes of generalized fuzzy open sets---fuzzy $\at$-semi-open, fuzzy $\at$-pre-open, fuzzy $\at$-$\alpha$-open, fuzzy $\at$-$\beta$-open, and fuzzy $\at$-$b$-open sets---are introduced, and a complete hierarchy among them is established with counterexamples separating all distinct classes. Fuzzy $\at$-continuity and its decompositions are studied. Separation axioms fuzzy $\at$-$T_i$ ($i = 0, 1, 2$) and fuzzy $\at$-regularity are introduced, and their dependence on the fuzzy scope function is demonstrated. Fuzzy aura-based lower and upper approximation operators are defined, generalizing both the crisp aura rough set model and the Dubois--Prade fuzzy rough set model. Building on these operators, we propose a novel multi-criteria decision-making algorithm called FA-MCDM (Fuzzy Aura Multi-Criteria Decision Making) and apply it to a medical diagnosis problem. A comparison with four existing methods and a comprehensive sensitivity analysis confirm the effectiveness and robustness of the proposed approach.
\end{abstract}

\maketitle

\section{Introduction}

Enriching topological spaces with auxiliary structures has been a major research direction in general topology over the past several decades. Ideal topological spaces $(X, \tau, \mathcal{I})$, rooted in the work of Kuratowski~\cite{kuratowski} and Vaidyanathaswamy~\cite{vaidya}, were systematically developed by Jankovi\'{c} and Hamlett~\cite{jankovic}. Dual and related structures---filters (Cartan~\cite{cartan}), grills (Choquet~\cite{choquet}, Roy and Mukherjee~\cite{roy}, Al-Omari and Noiri~\cite{alomari_noiri}), and the recent primals (Acharjee, \"{O}zko\c{c}, and Issaka~\cite{acharjee})---have all produced rich operator theories and topological refinements. In particular, Al-Omari and Alqahtani~\cite{alomari_alqahtani} developed primal structures with closure operators, and Al-Omari and Alghamdi~\cite{alomari_alghamdi} studied regularity and normality in primal spaces.

Parallel to these crisp developments, topology has been blended with fuzzy sets (Zadeh~\cite{zadeh}, Chang~\cite{chang}, Lowen~\cite{lowen}), soft sets (Molodtsov~\cite{molodtsov}, Shabir and Naz~\cite{shabir}), neutrosophic sets (Smarandache~\cite{smarandache}, Salama and Alblowi~\cite{salama_n}), and rough sets (Pawlak~\cite{pawlak}). These non-classical frameworks have found extensive applications in decision-making, data analysis, and information systems. In particular, Al-shami~\cite{alshami} introduced SR-fuzzy sets with weighted aggregated operators for decision-making, Abu-Gdairi et al.~\cite{abu} developed topological approaches to rough approximations, Alcantud~\cite{alcantud} applied soft rough sets to decision-making problems, and Demir, Saldam{\i}\c{s}, and Okurer~\cite{demir} studied bipolar fuzzy soft filters with applications to multi-criteria group decision making.

The interplay between fuzzy topology and rough sets has attracted considerable attention. Dubois and Prade~\cite{dubois} introduced fuzzy rough sets through fuzzy relations. Radzikowska and Kerre~\cite{radzikowska} provided a comparative study of fuzzy rough set models. Yao~\cite{yao} studied constructive and algebraic methods in rough set theory. In the context of topological approaches to rough approximation, Hosny~\cite{hosny} studied the idealization of $j$-approximation spaces, Salama~\cite{salama_b} developed bitopological approximation spaces with applications to data reduction, Y\"{u}ksel, Tozlu, and Dizman~\cite{yuksel} applied soft covering based rough sets to multi-criteria group decision making, and Yi\u{g}it~\cite{yigit} recently generalized rough topology to numerical data for attribute reduction.

In the broader context of generalized continuity and topological operators, Liu, Zhou, and Liu~\cite{liu} investigated $\mathcal{G}$-quotient mappings and $\mathcal{G}$-continuity in generalized topological spaces, while Khan, Khan, Arshad, and Et~\cite{khan} studied bounded linear operators in neutrosophic normed spaces with applications to Fr\'{e}chet derivatives, and Ahmad and Mursaleen~\cite{ahmad} explored statistical convergence in neutrosophic $n$-normed linear spaces. These works demonstrate the importance of topological operators in approximation theory and decision-making.

Recently, the author introduced the concept of an aura topological space $(X, \tau, \mathfrak{a})$~\cite{acikgoz_aura}, where a scope function $\mathfrak{a} : X \to \tau$ with $x \in \mathfrak{a}(x)$ assigns to each point a fixed open neighborhood. This simple axiom produced a rich theory: an additive \v{C}ech closure operator, five generalized open-set classes, separation axioms depending on the scope function, and a rough set model that requires no equivalence relation. The compactness and connectedness properties of aura spaces were further developed by the author in~\cite{acikgoz_compact}, and the ideal-aura framework extending the theory via ideals and new local functions was established in~\cite{acikgoz_ideal}.

The present paper generalizes the aura framework to the fuzzy setting. Working with Chang-type fuzzy topological spaces~\cite{chang}, we define a fuzzy scope function $\at : X \to \tilde{\tau}$ satisfying $\at(x)(x) = 1$ and call the resulting triple $(X, \tilde{\tau}, \at)$ a fuzzy aura topological space. The move from the crisp to the fuzzy setting is far from trivial: membership grades introduce a richer algebraic structure in the closure and interior operators, the relationship between the aura-closure and the topological closure becomes subtler, and the rough set model acquires the ability to handle graded uncertainty---a feature directly applicable to real-world problems where binary classification is inadequate.

Our main contributions are as follows:
\begin{enumerate}[(i)]
\item We introduce fuzzy aura topological spaces and establish the fundamental properties of the fuzzy aura-closure and fuzzy aura-interior operators, including the transfinite iteration to a Kuratowski closure (Section~3).
\item We define five classes of generalized fuzzy open sets and determine the complete hierarchy with counterexamples on both finite sets and the real line (Section~4).
\item We study fuzzy $\at$-continuity and prove decomposition theorems that extend the classical results of Levine and Mashhour (Section~5).
\item We introduce fuzzy $\at$-$T_0$, $\at$-$T_1$, $\at$-$T_2$ separation axioms and fuzzy $\at$-regularity, and investigate their dependence on the fuzzy scope function (Section~6).
\item We construct fuzzy aura-based rough approximation operators that unify both the crisp aura model and the Dubois--Prade model (Section~7).
\item We propose the FA-MCDM algorithm and apply it to a medical diagnosis problem, comparing with four existing methods and providing sensitivity analysis (Section~8).
\end{enumerate}

\section{Preliminaries}

We recall the basic definitions from fuzzy set theory, fuzzy topology, and rough set theory.

\subsection{Fuzzy Sets and Fuzzy Topology}

\begin{definition}[\cite{zadeh}]
A \emph{fuzzy set} on a non-empty set $X$ is a function $\mu : X \to [0,1]$. We denote by $I^X$ the collection of all fuzzy sets on $X$. The fuzzy sets $\bar{0}$ and $\bar{1}$ are defined by $\bar{0}(x) = 0$ and $\bar{1}(x) = 1$ for all $x \in X$.

For fuzzy sets $\mu, \nu \in I^X$, we write $\mu \leq \nu$ if $\mu(x) \leq \nu(x)$ for all $x \in X$, and define:
\[
(\mu \vee \nu)(x) = \max\{\mu(x), \nu(x)\}, \quad (\mu \wedge \nu)(x) = \min\{\mu(x), \nu(x)\}, \quad \mu^c(x) = 1 - \mu(x).
\]
For $\alpha \in [0,1]$, the $\alpha$-level set of $\mu$ is $\mu_\alpha = \{x \in X : \mu(x) \geq \alpha\}$, and the strong $\alpha$-level set is $\mu^{>}_\alpha = \{x \in X : \mu(x) > \alpha\}$.
\end{definition}

\begin{definition}[\cite{chang}]
A \emph{Chang-type fuzzy topology} on $X$ is a subcollection $\tilde{\tau} \subseteq I^X$ such that:
\begin{enumerate}[\upshape(FT1)]
\item $\bar{0}, \bar{1} \in \tilde{\tau}$;
\item $\lambda, \mu \in \tilde{\tau} \Rightarrow \lambda \wedge \mu \in \tilde{\tau}$;
\item $\{\lambda_i\}_{i \in J} \subseteq \tilde{\tau} \Rightarrow \bigvee_{i \in J} \lambda_i \in \tilde{\tau}$.
\end{enumerate}
Members of $\tilde{\tau}$ are \emph{fuzzy open}; $\mu$ is \emph{fuzzy closed} if $\mu^c \in \tilde{\tau}$.
\end{definition}

\begin{definition}
The \emph{fuzzy closure} and \emph{fuzzy interior} of $\mu \in I^X$ are:
\[
\cl(\mu) = \bigwedge\{\nu : \nu \text{ is fuzzy closed}, \, \mu \leq \nu\}, \qquad \inte(\mu) = \bigvee\{\lambda \in \tilde{\tau} : \lambda \leq \mu\}.
\]
\end{definition}

\begin{definition}[\cite{levine, mashhour, njastad}]
Let $(X, \tilde{\tau})$ be a fuzzy topological space and $\mu \in I^X$. Then $\mu$ is called:
\begin{enumerate}[\upshape(a)]
\item \emph{fuzzy semi-open}~\cite{azad} if $\mu \leq \cl(\inte(\mu))$;
\item \emph{fuzzy pre-open}~\cite{binshahna} if $\mu \leq \inte(\cl(\mu))$;
\item \emph{fuzzy $\alpha$-open}~\cite{binshahna} if $\mu \leq \inte(\cl(\inte(\mu)))$;
\item \emph{fuzzy $\beta$-open} if $\mu \leq \cl(\inte(\cl(\mu)))$.
\end{enumerate}
The collections are denoted $FSO(X)$, $FPO(X)$, $F\alpha O(X)$, $F\beta O(X)$, respectively.
\end{definition}

\subsection{Rough Sets}

\begin{definition}[\cite{pawlak}]
Let $R$ be an equivalence relation on $U$. For $A \subseteq U$:
\[
\underline{\apr}_R(A) = \{x \in U : [x]_R \subseteq A\}, \qquad \overline{\apr}_R(A) = \{x \in U : [x]_R \cap A \neq \emptyset\}.
\]
\end{definition}

\begin{definition}[\cite{dubois}]
Let $R$ be a fuzzy relation on $U$ with $R(x,x) = 1$. The \emph{Dubois--Prade fuzzy rough approximations} of $\mu \in I^U$ are:
\[
\underline{\apr}_R(\mu)(x) = \inf_{y \in U} \max\{1 - R(x,y), \mu(y)\}, \qquad \overline{\apr}_R(\mu)(x) = \sup_{y \in U} \min\{R(x,y), \mu(y)\}.
\]
\end{definition}

\begin{definition}[\cite{cech}]
A function $c : I^X \to I^X$ is a \emph{fuzzy \v{C}ech closure operator} if $c(\bar{0}) = \bar{0}$, $\mu \leq c(\mu)$, and $\mu \leq \nu \Rightarrow c(\mu) \leq c(\nu)$. It is \emph{additive} if $c(\mu \vee \nu) = c(\mu) \vee c(\nu)$, and a \emph{Kuratowski closure} if additionally $c(c(\mu)) = c(\mu)$.
\end{definition}

\section{Fuzzy Aura Topological Spaces}

\subsection{Fuzzy Scope Function and Fuzzy $\at$-Spaces}

\begin{definition}\label{def:fuzzy_scope}
Let $(X, \tilde{\tau})$ be a fuzzy topological space. A function $\at : X \to \tilde{\tau}$ is called a \emph{fuzzy scope function} on $(X, \tilde{\tau})$ if
\begin{equation}\label{eq:scope_axiom}
\at(x)(x) = 1 \quad \text{for every } x \in X.
\end{equation}
The triple $(X, \tilde{\tau}, \at)$ is called a \emph{fuzzy aura topological space} (briefly, a \emph{fuzzy $\at$-space}).
\end{definition}

\begin{remark}
The value $\at(x)(y) \in [0,1]$ represents the degree to which $y$ lies within the aura of $x$. The axiom $\at(x)(x) = 1$ ensures full self-membership. When all $\at(x)$ are characteristic functions, the fuzzy $\at$-space reduces to the crisp aura topological space of~\cite{acikgoz_aura}.
\end{remark}

\begin{example}\label{ex:finite}
Let $X = \{p, q, r\}$ and $\tilde{\tau} = \{\bar{0}, \bar{1}, \lambda_1, \lambda_2, \lambda_3\}$ where $\lambda_1 = (1, 0.4, 0)$, $\lambda_2 = (0.6, 1, 0.3)$, $\lambda_3 = (1, 1, 0.3)$. Define $\at(p) = \lambda_1$, $\at(q) = \lambda_2$, $\at(r) = \bar{1}$. Then $(X, \tilde{\tau}, \at)$ is a fuzzy $\at$-space.
\end{example}

\begin{example}\label{ex:triangular}
Let $(\R, \tilde{\tau}_u)$ be the real line with the fuzzy topology generated by triangular fuzzy neighborhoods $\lambda_{x_0,\varepsilon}(x) = \max\{0, 1 - |x - x_0|/\varepsilon\}$. For fixed $\varepsilon > 0$, define $\at_\varepsilon(x) = \lambda_{x,\varepsilon}$. Then $(\R, \tilde{\tau}_u, \at_\varepsilon)$ is a fuzzy $\at$-space with uniform aura radius.
\end{example}

\begin{example}\label{ex:cauchy}
On $(\R, \tilde{\tau}_u)$, define $\at(x)(y) = 1/(1 + (x-y)^2)$. Then $\at(x)(x) = 1$ and the aura decays as a Cauchy distribution, modeling smooth influence decay.
\end{example}

\begin{example}\label{ex:gaussian}
On $(\R, \tilde{\tau}_u)$, for fixed $\sigma > 0$, define $\at_\sigma(x)(y) = \exp(-(x-y)^2/\sigma^2)$. Then $\at_\sigma(x)(x) = 1$ and the aura decays as a Gaussian. This models situations where influence decreases rapidly beyond a characteristic scale $\sigma$.
\end{example}

\subsection{Fuzzy Aura-Closure Operator}

\begin{definition}\label{def:fuzzy_closure}
Let $(X, \tilde{\tau}, \at)$ be a fuzzy $\at$-space. The \emph{fuzzy aura-closure} of $\mu \in I^X$ is
\begin{equation}\label{eq:closure}
\cl_{\at}(\mu)(x) = \sup_{y \in X} \min\{\at(x)(y), \mu(y)\}.
\end{equation}
\end{definition}

\begin{remark}
The value $\cl_{\at}(\mu)(x)$ is the degree to which the aura of $x$ meets $\mu$. When $\at(x) = \chi_{a(x)}$ and $\mu = \chi_A$ are crisp, $\cl_{\at}(\chi_A)(x) = 1$ if and only if $a(x) \cap A \neq \emptyset$, recovering the crisp aura-closure from~\cite{acikgoz_aura}.
\end{remark}

\begin{theorem}\label{thm:closure_props}
The operator $\cl_{\at} : I^X \to I^X$ satisfies for all $\mu, \nu \in I^X$:
\begin{enumerate}[\upshape(a)]
\item $\cl_{\at}(\bar{0}) = \bar{0}$.
\item $\mu \leq \cl_{\at}(\mu)$ \textup{(extensivity)}.
\item $\mu \leq \nu \Rightarrow \cl_{\at}(\mu) \leq \cl_{\at}(\nu)$ \textup{(monotonicity)}.
\item $\cl_{\at}(\mu \vee \nu) = \cl_{\at}(\mu) \vee \cl_{\at}(\nu)$ \textup{(finite additivity)}.
\item $\cl(\mu) \leq \cl_{\at}(\mu)$ when $\at(x)$ generates the neighborhoods of $x$ in $\tilde{\tau}$.
\end{enumerate}
Hence, $\cl_{\at}$ is a fuzzy additive \v{C}ech closure operator.
\end{theorem}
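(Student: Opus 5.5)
The plan is to verify (a)--(d) directly from formula \eqref{eq:closure}, using only the lattice properties of $\min$ and $\sup$ on $[0,1]$ and the scope axiom \eqref{eq:scope_axiom}. Item (e) is treated separately, after fixing a precise reading of its hypothesis. The final sentence of the theorem then follows from (a)--(d) by comparison with the definition of a fuzzy \v{C}ech closure operator in the preliminaries.

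For (a), $\min\{\at(x)(y),0\}=0$ for every $y$, so the supremum is $0$. For (b), I would take $y=x$ in the supremum. Since $\at(x)(x)=1$,
\[
\cl_{\at}(\mu)(x)\ \ge\ \min\{1,\mu(x)\}=\mu(x).
\]
This is the only place the scope axiom is used. For (c), if $\mu\le\nu$ pointwise, then $\min\{\at(x)(y),\mu(y)\}\le\min\{\at(x)(y),\nu(y)\}$ for each $y$, and taking suprema preserves the inequality. For (d), I would use distributivity of $\min$ over $\max$ in the chain $[0,1]$:
\[
\min\{\at(x)(y),\max\{\mu(y),\nu(y)\}\}=\max\bigl\{\min\{\at(x)(y),\mu(y)\},\min\{\at(x)(y),\nu(y)\}\bigr\}.
\]
Then I would use $\sup_y\max\{f(y),g(y)\}=\max\{\sup_y f,\sup_y g\}$, which holds for any bounded real functions. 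Alternatively, "$\ge$" follows from monotonicity (c), and "$\le$" from the termwise inequality above.

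Part (e) is the main obstacle, because its hypothesis is informal and $\cl_{\at}(\mu)$ is generally not fuzzy closed. So one cannot argue "$\cl_{\at}(\mu)$ is a closed superset of $\mu$". I would instead argue pointwise, in the spirit of the crisp proof that $x\in\cl(A)$ forces the open neighbourhood $a(x)$ to meet $A$. I would use the Pu--Liu characterization of fuzzy closure. The fuzzy point $x_t$ (with $0<t\le 1$) lies in $\cl(\mu)$ if and only if every open Q-neighbourhood of $x_t$ is quasi-coincident with $\mu$.

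I would read "$\at(x)$ generates the neighbourhoods of $x$" as the statement that, for each $t\in(0,1]$, the family of open Q-neighbourhoods of $x_t$ has a base of sets of the form $\at(x)\wedge(\text{constant }1-t+\varepsilon)$ with $\varepsilon>0$, or more simply that these sets are open and Q-neighbourhoods of $x_t$. Suppose $\cl(\mu)(x)\ge t$. Applying quasi-coincidence to $\at(x)\wedge\overline{(1-t+\varepsilon)}$ gives some $y$ with
\[
\min\{\at(x)(y),\,1-t+\varepsilon\}+\mu(y)>1.
\]
This forces both $\mu(y)>t-\varepsilon$ and $\at(x)(y)>t-\varepsilon$ (the latter using $\mu(y)\le 1$). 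Hence $\cl_{\at}(\mu)(x)>t-\varepsilon$. Letting $\varepsilon\to0$ and taking the supremum over admissible $t$ yields $\cl(\mu)\le\cl_{\at}(\mu)$.

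If that reading is too strong, the fallback is to state (e) under the explicit assumption that every open Q-neighbourhood of $x_t$ dominates such a truncation of $\at(x)$. The same computation then goes through. I expect the main work to lie in making this hypothesis precise, not in the inequalities themselves.
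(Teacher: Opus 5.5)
Your treatment of (a)--(d) is correct and matches the paper's proof: direct pointwise computation, with distributivity of $\min$ over $\max$ for (d). The gap is in (e), at the step where you conclude $\at(x)(y)>t-\varepsilon$. From $\min\{\at(x)(y),1-t+\varepsilon\}+\mu(y)>1$ you correctly get $\mu(y)>t-\varepsilon$. For the aura value, however, you only get $\at(x)(y)>1-\mu(y)$, which says nothing when $\mu(y)$ is near $1$. For example, $\at(x)(y)=0.1$ and $\mu(y)=1$ satisfy your inequality for every $t\in(0,1]$ and $\varepsilon>0$. Quasi-coincidence is a sum-type condition ($a+b>1$). It cannot deliver the $\min$-type bound $\min\{\at(x)(y),\mu(y)\}\ge t-\varepsilon$ that $\cl_{\at}$ measures.

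The step cannot be repaired under your hypothesis, because the inequality itself fails there. Let $X=\{p,q\}$, write fuzzy sets as pairs $(\lambda(p),\lambda(q))$, and let $\tilde{\tau}=\{(c,c):0\le c\le 0.1\}\cup\{(a,b):0.1\le a,b\le 1\}$. This is a Chang topology containing all constants. Take $\at(p)=(1,0.1)$ and $\at(q)=(0.1,1)$. Then $\at(p)$ is the smallest open set with value $1$ at $p$, and every truncation $\at(p)\wedge\bar{c}$ is open. These truncations form a base of the open Q-neighbourhoods of every $p_t$, and symmetrically for $q$, so both your main reading and your fallback hold. For $\mu=(0,1)$ we get $\cl_{\at}(\mu)(p)=\max\{\min\{1,0\},\min\{0.1,1\}\}=0.1$. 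But the only open set below $\mu^c=(1,0)$ is $\bar{0}$, so $\cl(\mu)=\bar{1}$ and $\cl(\mu)(p)=1$.

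The paper's own argument for (e) has the same hole: it passes from $\sup_y\min\{\at(x)(y),\mu(y)\}>0$ to $\cl_{\at}(\mu)(x)\ge\cl(\mu)(x)$ without justification, so it cannot fill yours. What (e) actually needs is a level-set condition: $\cl(\mu)\le\cl_{\at}(\mu)$ holds for all $\mu$ if and only if $\inte\bigl(\chi_{\{y:\,\at(x)(y)>s\}}\wedge\overline{1-s}\bigr)(x)\ge 1-s$ for all $x\in X$ and $s\in[0,1)$.
\begin{itemize}
\item Sufficiency: put $s=\cl_{\at}(\mu)(x)<1$. Then $\mu(y)\le s$ wherever $\at(x)(y)>s$, so that fuzzy set lies below $\mu^c$, and $1-\cl(\mu)(x)=\inte(\mu^c)(x)\ge 1-s$.
\item Necessity: test with $\mu=\chi_{\{y:\,\at(x)(y)\le s\}}\vee\bar{s}$. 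For this $\mu$, $\cl_{\at}(\mu)(x)=s$ and $\mu^c$ is exactly $\chi_{\{y:\,\at(x)(y)>s\}}\wedge\overline{1-s}$.
\end{itemize}
The condition holds, for instance, when each $\at(x)=\chi_{a(x)}$ is crisp and every $\chi_{a(x)}\wedge\bar{c}$ is open. That is the genuine fuzzy analogue of the crisp argument you were imitating.
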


\begin{proof}
(a) $\cl_{\at}(\bar{0})(x) = \sup_y \min\{\at(x)(y), 0\} = 0$.

(b) $\cl_{\at}(\mu)(x) \geq \min\{\at(x)(x), \mu(x)\} = \min\{1, \mu(x)\} = \mu(x)$.

(c) $\mu \leq \nu$ gives $\min\{\at(x)(y), \mu(y)\} \leq \min\{\at(x)(y), \nu(y)\}$ for all $y$, and suprema are ordered.

(d) Using $\min(a, \max(b,c)) = \max(\min(a,b), \min(a,c))$:
\begin{align*}
\cl_{\at}(\mu \vee \nu)(x) &= \sup_y \min\{\at(x)(y), \max\{\mu(y), \nu(y)\}\} \\
&= \sup_y \max\{\min\{\at(x)(y), \mu(y)\}, \min\{\at(x)(y), \nu(y)\}\} \\
&= \max\bigl\{\sup_y \min\{\at(x)(y), \mu(y)\}, \sup_y \min\{\at(x)(y), \nu(y)\}\bigr\} \\
&= (\cl_{\at}(\mu) \vee \cl_{\at}(\nu))(x).
\end{align*}

(e) Let $\cl(\mu)(x) > \alpha > 0$. Then every fuzzy open neighborhood $\lambda$ of $x$ satisfies $\sup_y \min\{\lambda(y), \mu(y)\} > 0$. In particular, taking $\lambda = \at(x) \in \tilde{\tau}$ with $\at(x)(x) = 1$, we get $\cl_{\at}(\mu)(x) = \sup_y \min\{\at(x)(y), \mu(y)\} \geq \cl(\mu)(x)$ when $\at(x)$ is the smallest open neighborhood, which holds in the neighborhood-generated case.
\end{proof}

\begin{theorem}\label{thm:alpha_level}
Let $(X, \tilde{\tau}, \at)$ be a fuzzy $\at$-space. For any $\mu \in I^X$ and $\alpha \in [0,1]$:
\begin{equation}\label{eq:alpha_level}
(\cl_{\at}(\mu))_\alpha = \{x \in X : \at(x) \wedge \mu \text{ attains a value} \geq \alpha\} = \{x \in X : \exists\, y, \, \at(x)(y) \geq \alpha \text{ and } \mu(y) \geq \alpha\}.
\end{equation}
\end{theorem}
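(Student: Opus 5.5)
The plan is to unfold the definition of $\cl_{\at}$ from Definition~\ref{def:fuzzy_closure} and compare the level-set condition $\sup_{y}\min\{\at(x)(y),\mu(y)\}\geq\alpha$ with the pointwise condition $\exists\,y:\ \at(x)(y)\geq\alpha,\ \mu(y)\geq\alpha$. The middle and right-hand descriptions in (\ref{eq:alpha_level}) say the same thing, since $\min\{a,b\}\geq\alpha$ iff $a\geq\alpha$ and $b\geq\alpha$. So only the equality between $(\cl_{\at}(\mu))_\alpha$ and this set needs proof, and I would prove the two inclusions separately. The case $\alpha=0$ is trivial: both sides are all of $X$, the right side via $y=x$ and $\at(x)(x)=1$.

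\emph{Inclusion $\supseteq$.} Suppose some $y$ satisfies $\at(x)(y)\geq\alpha$ and $\mu(y)\geq\alpha$. Then $\min\{\at(x)(y),\mu(y)\}\geq\alpha$, and the supremum in (\ref{eq:closure}) dominates this term, so $\cl_{\at}(\mu)(x)\geq\alpha$. This direction holds unconditionally.

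\emph{Inclusion $\subseteq$.} This is the main obstacle, because $\cl_{\at}(\mu)(x)\geq\alpha$ only says the supremum reaches $\alpha$, not that some single $y$ attains it. I would first argue under the hypothesis that the supremum in (\ref{eq:closure}) is attained, in particular whenever $X$ is finite, as in Example~\ref{ex:finite}. Then a maximizing $y_0$ gives $\min\{\at(x)(y_0),\mu(y_0)\}=\cl_{\at}(\mu)(x)\geq\alpha$, which is the required witness.

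Without attainment the inclusion can fail, and I would record a counterexample to show the hypothesis is needed. Take $X=\R$, let $\at(x)=\bar 1$ for all $x$ (an admissible fuzzy scope function, since $\bar 1\in\tilde\tau$), and set $\mu(y)=\alpha|y|/(1+|y|)$ with $\alpha>0$. Then $\cl_{\at}(\mu)\equiv\alpha$, so every $x$ lies in the $\alpha$-level set, yet $\mu(y)<\alpha$ for all $y$, so no witness exists. The statement should therefore be read either with attained suprema (e.g.\ $X$ finite) or in the strong-level form
\[
(\cl_{\at}(\mu))^{>}_\alpha=\{x\in X:\exists\,y,\ \at(x)(y)>\alpha\text{ and }\mu(y)>\alpha\}.
\]
The strong-level form holds for all $X$ and $\alpha$: a supremum exceeds $\alpha$ iff some term exceeds $\alpha$, and $\min\{a,b\}>\alpha$ iff $a>\alpha$ and $b>\alpha$.

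I would present the proof of (\ref{eq:alpha_level}) under the attainment hypothesis and add the strong-level version as the unconditional statement.
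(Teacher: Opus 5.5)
Your proposal is correct, and where the statement is actually true it follows the paper's route. The paper's entire proof is the chain: $\cl_{\at}(\mu)(x)\geq\alpha$ iff $\sup_y\min\{\at(x)(y),\mu(y)\}\geq\alpha$ iff there exists $y$ with $\at(x)(y)\geq\alpha$ and $\mu(y)\geq\alpha$. The second equivalence is exactly the step you single out. Only its backward direction holds in general; the forward direction needs the supremum to be attained.

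Your counterexample is valid. $\bar 1$ lies in every Chang fuzzy topology, so $\at\equiv\bar 1$ is admissible. By Proposition~\ref{prop:trivial}, $\cl_{\at}(\mu)\equiv\sup_y\mu(y)=\alpha$, while $\mu<\alpha$ everywhere. So the theorem as stated fails for infinite $X$, and the paper's proof has a genuine gap at precisely this point.

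The failure is not tied to the degenerate aura. Take the Gaussian aura of Example~\ref{ex:gaussian}, $0<\alpha\leq 1$, and $\mu(y)=\alpha(1-|y|)$ for $0<|y|<1$ with $\mu=0$ elsewhere. Then $\cl_{\at}(\mu)(0)=\alpha$, yet $\mu(y)<\alpha$ for every $y$, so there is no witness.

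For finite $X$ the statement is fine, which covers all of the paper's finite examples and the FA-MCDM application. Your strong-level identity is the correct unconditional version. Combining your two observations also gives an unconditional description of the non-strong level set: for $\alpha\in(0,1]$,
$(\cl_{\at}(\mu))_\alpha=\bigcap_{0\leq\beta<\alpha}(\cl_{\at}(\mu))^{>}_\beta$,
where each term is given by your strong-level formula.
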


\begin{proof}
$\cl_{\at}(\mu)(x) \geq \alpha$ if and only if $\sup_y \min\{\at(x)(y), \mu(y)\} \geq \alpha$ if and only if there exists $y$ with $\at(x)(y) \geq \alpha$ and $\mu(y) \geq \alpha$.
\end{proof}

\begin{theorem}\label{thm:not_idempotent}
The operator $\cl_{\at}$ is not idempotent in general.
\end{theorem}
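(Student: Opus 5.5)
Since the statement is a negative one, I will prove it with a single explicit counterexample: a fuzzy $\at$-space and a fuzzy set $\mu$ with $\cl_{\at}(\cl_{\at}(\mu)) \neq \cl_{\at}(\mu)$. The mechanism to exploit is the lack of transitivity of the fuzzy relation $R(x,y)=\at(x)(y)$. The closure in Definition~\ref{def:fuzzy_closure} is exactly the Dubois--Prade upper approximation with respect to $R$. Applying it twice therefore corresponds to the max--min composition $R\circ R$, which can strictly exceed $R$ when the auras chain: $x$ sees $y$, $y$ sees $z$, but $x$ does not see $z$.

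I will use the finite space of Example~\ref{ex:finite}, where all values can be computed by hand. There $\at(p)=(1,0.4,0)$, $\at(q)=(0.6,1,0.3)$ and $\at(r)=\bar 1$. The point $p$ sees $q$ with degree $0.4$ but sees $r$ with degree $0$, while $q$ sees $r$ with degree $0.3$. So I take $\mu=\chi_{\{r\}}=(0,0,1)$ and compute with formula~\eqref{eq:closure}:
\[
\cl_{\at}(\mu)(p)=\min\{0,1\}=0,\quad \cl_{\at}(\mu)(q)=\min\{0.3,1\}=0.3,\quad \cl_{\at}(\mu)(r)=1.
\]
This gives $\cl_{\at}(\mu)=(0,0.3,1)$. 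Applying the operator again at $p$ yields
\[
\cl_{\at}(\cl_{\at}(\mu))(p)=\max\{\min\{1,0\},\min\{0.4,0.3\},\min\{0,1\}\}=0.3>0 .
\]
Hence $\cl_{\at}(\cl_{\at}(\mu))\neq\cl_{\at}(\mu)$, which proves non-idempotency. For completeness I will also record the values at $q$ and $r$; they are not needed for the conclusion.

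The only real obstacle is choosing the data so that the two-step chain gives a strictly larger value than the direct aura membership. The point to check is that the direct link $\at(p)(r)=0$ is strictly below $\min\{\at(p)(q),\at(q)(r)\}=0.3$. Once that holds, the computation is immediate.

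As an optional remark, I can add a crisp counterexample on the real line. Take $\at(x)=\chi_{(x-1,x+1)}$ (fuzzy open in the topology of characteristic functions of open sets) and $\mu=\chi_{\{0\}}$. Each application of the operator enlarges the support by one unit, so the operator is not idempotent there either. This also motivates the transfinite iteration mentioned in the introduction.
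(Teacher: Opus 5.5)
Your counterexample is correct and is essentially the paper's argument. The paper also uses a three-point chain $p\to q\to r$ with $\at(p)(r)=0$, namely $\tilde{\tau}=I^X$, $\at(p)=(1,0.8,0)$, $\at(q)=(0,1,0.7)$, $\at(r)=(0,0,1)$ and $\mu=(0,0,0.6)$, and your values $\cl_{\at}(\mu)=(0,0.3,1)$ and $\cl_{\at}(\cl_{\at}(\mu))(p)=0.3$ are right.

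One repair is needed: the $\tilde{\tau}$ of Example~\ref{ex:finite} is not actually a fuzzy topology, because $\lambda_1\wedge\lambda_2=(0.6,0.4,0)\notin\tilde{\tau}$. Take $\tilde{\tau}=I^X$ (or adjoin $(0.6,0.4,0)$) instead. Your computation is unchanged, since $\cl_{\at}$ depends only on $\at$, and your crisp real-line example is already a valid counterexample as stated.
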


\begin{proof}
Let $X = \{p, q, r\}$, $\tilde{\tau} = I^X$. Define $\at(p) = (1, 0.8, 0)$, $\at(q) = (0, 1, 0.7)$, $\at(r) = (0, 0, 1)$. Let $\mu = (0, 0, 0.6)$. Then:
\[
\cl_{\at}(\mu) = (0, 0.6, 0.6), \qquad \cl_{\at}(\cl_{\at}(\mu)) = (0.6, 0.6, 0.6) \neq \cl_{\at}(\mu). \qedhere
\]
\end{proof}

\begin{lemma}\label{lem:constant}
Let $(X, \tilde{\tau}, \at)$ be a fuzzy $\at$-space. For any constant fuzzy set $\bar{\alpha}$ (where $\bar{\alpha}(x) = \alpha$ for all $x$), $\cl_{\at}(\bar{\alpha}) = \bar{\alpha}$.
\end{lemma}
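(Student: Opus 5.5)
The plan is to prove the equality pointwise by establishing the two inequalities $\bar{\alpha} \leq \cl_{\at}(\bar{\alpha})$ and $\cl_{\at}(\bar{\alpha}) \leq \bar{\alpha}$, working directly from Definition~\ref{def:fuzzy_closure}.

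For the lower bound, I would not recompute anything. Extensivity, Theorem~\ref{thm:closure_props}(b), already gives $\bar{\alpha} \leq \cl_{\at}(\bar{\alpha})$. If one prefers to see it directly, fix $x \in X$ and take $y = x$ in the supremum. Since $\at(x)(x) = 1$ by the scope axiom \eqref{eq:scope_axiom}, we get $\cl_{\at}(\bar{\alpha})(x) \geq \min\{1, \alpha\} = \alpha$.

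For the upper bound, fix $x$ and observe that for every $y \in X$,
\[
\min\{\at(x)(y), \bar{\alpha}(y)\} = \min\{\at(x)(y), \alpha\} \leq \alpha .
\]
Each term inside the supremum is therefore bounded above by $\alpha$, and so the supremum is as well: $\cl_{\at}(\bar{\alpha})(x) \leq \alpha$. Combining the two bounds gives $\cl_{\at}(\bar{\alpha})(x) = \alpha$ for every $x$, which is the claim.

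There is no genuine obstacle here. The only point worth stating explicitly is that the upper bound relies on $\bar{\alpha}$ being constant, so that a single bound $\alpha$ controls every term of the supremum uniformly in $y$. The lower bound relies on the normalization $\at(x)(x) = 1$. The argument uses no property of the topology $\tilde{\tau}$ beyond $\at(x)$ taking values in $[0,1]$. The cases $\alpha = 0$ and $\alpha = 1$ need no separate treatment; $\alpha = 0$ in particular recovers Theorem~\ref{thm:closure_props}(a).
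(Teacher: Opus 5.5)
Your proof is correct and is essentially the paper's argument. The paper writes it as the single chain $\sup_y \min\{\at(x)(y),\alpha\} = \min\{\sup_y \at(x)(y),\alpha\} = \min\{1,\alpha\} = \alpha$, using $\at(x)(x)=1$ to get $\sup_y \at(x)(y)=1$, whereas you split the same content into two inequalities: the term $y=x$ gives $\geq$, and the uniform bound by $\alpha$ gives $\leq$.
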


\begin{proof}
$\cl_{\at}(\bar{\alpha})(x) = \sup_y \min\{\at(x)(y), \alpha\} = \min\{\sup_y \at(x)(y), \alpha\} = \min\{1, \alpha\} = \alpha$, where $\sup_y \at(x)(y) \geq \at(x)(x) = 1$.
\end{proof}

\begin{lemma}\label{lem:cap_constant}
Let $(X, \tilde{\tau}, \at)$ be a fuzzy $\at$-space. For $\mu \in I^X$ and $\alpha \in [0,1]$:
\[
\cl_{\at}(\mu \wedge \bar{\alpha}) = \cl_{\at}(\mu) \wedge \bar{\alpha}.
\]
\end{lemma}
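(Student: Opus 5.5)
The plan is a direct pointwise computation from the defining formula \eqref{eq:closure}, using only the lattice properties of $\min$ and $\sup$ on $[0,1]$. Fix $x \in X$. Unfolding the definition, I will write
\[
\cl_{\at}(\mu \wedge \bar{\alpha})(x) = \sup_{y \in X} \min\{\at(x)(y), \min\{\mu(y), \alpha\}\} = \sup_{y \in X} \min\bigl\{\min\{\at(x)(y), \mu(y)\}, \alpha\bigr\},
\]
where the second equality uses associativity and commutativity of $\min$.

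The key step is to pull the constant $\alpha$ out of the supremum. Concretely, I will show that for any family $\{t_y\}_{y\in X}\subseteq[0,1]$ we have $\sup_y \min\{t_y, \alpha\} = \min\{\sup_y t_y, \alpha\}$. This is the same manipulation already used in the proof of Lemma~\ref{lem:constant}.

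I will establish it by two inequalities.
\begin{enumerate}[(i)]
\item The inequality ``$\leq$'' is immediate, since each $\min\{t_y,\alpha\}$ is bounded by both $t_y \le \sup_y t_y$ and $\alpha$.
\item For ``$\geq$'', put $s = \sup_y t_y$ and split into two cases.
\begin{itemize}
\item If $s \le \alpha$, then $\min\{t_y,\alpha\} = t_y$ for every $y$, so the left side equals $s$.
\item If $s > \alpha$, then some $y_0$ has $t_{y_0} > \alpha$, so the left side is at least $\min\{t_{y_0},\alpha\} = \alpha$.
\end{itemize}
\end{enumerate}
In both cases the left side is at least $\min\{s,\alpha\}$.

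Applying this with $t_y = \min\{\at(x)(y), \mu(y)\}$ gives $\cl_{\at}(\mu\wedge\bar\alpha)(x) = \min\{\cl_{\at}(\mu)(x), \alpha\} = (\cl_{\at}(\mu)\wedge\bar\alpha)(x)$. Since $x$ was arbitrary, the identity holds.

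The only point needing care is the interchange of $\sup$ and $\min$ with a constant when the supremum is not attained. This is why I split the argument into the two cases in (ii) rather than picking a maximizing $y$. Nothing about the scope axiom $\at(x)(x)=1$ or about $\tilde{\tau}$ is needed here.
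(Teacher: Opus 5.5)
Your proof is correct and follows the paper's argument: unfold the definition, regroup the nested minima, and pull the constant $\alpha$ out of the supremum. The only difference is that you explicitly justify $\sup_y \min\{t_y,\alpha\} = \min\{\sup_y t_y,\alpha\}$ with the two-case argument, whereas the paper uses this identity without comment.
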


\begin{proof}
\begin{align*}
\cl_{\at}(\mu \wedge \bar{\alpha})(x) &= \sup_y \min\{\at(x)(y), \min\{\mu(y), \alpha\}\} \\
&= \sup_y \min\{\min\{\at(x)(y), \mu(y)\}, \alpha\} \\
&= \min\bigl\{\sup_y \min\{\at(x)(y), \mu(y)\}, \alpha\bigr\} = \min\{\cl_{\at}(\mu)(x), \alpha\}. \qedhere
\end{align*}
\end{proof}

\subsection{Fuzzy Aura-Interior Operator}

\begin{definition}\label{def:fuzzy_interior}
The \emph{fuzzy aura-interior} of $\mu \in I^X$ is
\begin{equation}\label{eq:interior}
\inte_{\at}(\mu)(x) = \inf_{y \in X} \max\{1 - \at(x)(y), \mu(y)\}.
\end{equation}
\end{definition}

\begin{theorem}\label{thm:interior_props}
For all $\mu, \nu \in I^X$:
\begin{enumerate}[\upshape(a)]
\item $\inte_{\at}(\bar{1}) = \bar{1}$.
\item $\inte_{\at}(\mu) \leq \mu$.
\item $\mu \leq \nu \Rightarrow \inte_{\at}(\mu) \leq \inte_{\at}(\nu)$.
\item $\inte_{\at}(\mu \wedge \nu) = \inte_{\at}(\mu) \wedge \inte_{\at}(\nu)$.
\item $\inte_{\at}(\mu) = (\cl_{\at}(\mu^c))^c$ \textup{(duality)}.
\item $\inte_{\at}(\mu) \leq \inte(\mu)$ when the conditions of Theorem~\ref{thm:closure_props}(e) hold.
\end{enumerate}
\end{theorem}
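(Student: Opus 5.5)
The plan is to prove the duality (e) first and then derive (a)--(d) from it together with Theorem~\ref{thm:closure_props}. For (e), I use the elementary identities $1-\max\{a,b\}=\min\{1-a,1-b\}$ and $1-\inf_y t_y=\sup_y(1-t_y)$. This gives
\[
1-\inte_{\at}(\mu)(x)=\sup_{y\in X}\bigl(1-\max\{1-\at(x)(y),\mu(y)\}\bigr)=\sup_{y\in X}\min\{\at(x)(y),\mu^c(y)\}=\cl_{\at}(\mu^c)(x),
\]
which is exactly $\inte_{\at}(\mu)=(\cl_{\at}(\mu^c))^c$.

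With (e) in hand, the remaining algebraic items transfer directly.
\begin{enumerate}[(a)]
\item Since $\bar{1}^c=\bar{0}$, Theorem~\ref{thm:closure_props}(a) gives $\inte_{\at}(\bar{1})=(\cl_{\at}(\bar 0))^c=\bar{1}$.
\item By Theorem~\ref{thm:closure_props}(b), $\mu^c\le\cl_{\at}(\mu^c)$, and complementing gives $\inte_{\at}(\mu)\le\mu$. Directly, the term $y=x$ in the infimum equals $\max\{0,\mu(x)\}=\mu(x)$.
\item Complementation reverses order, so $\mu\le\nu$ gives $\nu^c\le\mu^c$. Monotonicity of $\cl_{\at}$ and a second complementation then yield the claim.
\item De Morgan gives $(\mu\wedge\nu)^c=\mu^c\vee\nu^c$. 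Finite additivity (Theorem~\ref{thm:closure_props}(d)) and complementing again turn the join into a meet. Alternatively, one can argue directly via $\max(a,\min(b,c))=\min(\max(a,b),\max(a,c))$ and the fact that infima commute with finite minima.
\end{enumerate}

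For (f), I would again dualize. Standard fuzzy topology gives $\inte(\mu)=(\cl(\mu^c))^c$. Under the hypotheses of Theorem~\ref{thm:closure_props}(e), applied to $\mu^c$, we have $\cl(\mu^c)\le\cl_{\at}(\mu^c)$. Complementing gives $\inte_{\at}(\mu)\le\inte(\mu)$.

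The main obstacle is (f), since it depends entirely on Theorem~\ref{thm:closure_props}(e). That result rests on an informal hypothesis, ``$\at(x)$ generates the neighborhoods of $x$'', which I would make explicit as: $\at(x)$ is the smallest fuzzy open set with value $1$ at $x$. I would invoke that result as stated rather than reprove it, noting that the inclusion direction is inherited verbatim through complementation. A direct route via $\inte(\mu)=\bigvee\{\lambda\in\tilde\tau:\lambda\le\mu\}$ would require the same neighborhood assumption, so duality is the cleanest path.
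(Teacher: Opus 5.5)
Your items (a)--(e) are correct. The paper proves (e) by exactly your computation. It checks (a)--(d) by direct pointwise computation, which are the same computations you list as alternatives, so routing (a)--(d) through (e) and Theorem~\ref{thm:closure_props} is only a reorganization. For (f) you reproduce the paper's argument exactly: dualize Theorem~\ref{thm:closure_props}(e). The complementation step is valid. The gap is that the result it relies on is false, so (f) is not established. The paper's proof has the same defect.

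Under your own explicit reading of the hypothesis, Theorem~\ref{thm:closure_props}(e), and hence (f), fails. Take the indiscrete Chang topology $\tilde{\tau} = \{\bar{0}, \bar{1}\}$ on any $X$. The scope axiom forces $\at(x) = \bar{1}$, which is the only nonzero fuzzy open set, so every reading of ``$\at(x)$ generates the neighborhoods of $x$'' holds. For $0 < \alpha < 1$ you get $\inte_{\at}(\bar{\alpha}) = \bar{\alpha}$ but $\inte(\bar{\alpha}) = \bar{0}$.

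More generally, Lemma~\ref{lem:constant} together with (e) gives $\inte_{\at}(\bar{\alpha}) = \bar{\alpha}$ in every fuzzy $\at$-space. So (f) would force every constant to be fuzzy open, and a Chang topology need not satisfy this.

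Adding all constants does not rescue it. On $X = \{p, q\}$, let $\tilde{\tau}$ be generated by $\at(p) = (1, 0.3)$, $\at(q) = (0.3, 1)$ and all constants. Then $\at(p)$ is still the smallest open set with value $1$ at $p$. Yet $\mu = (0.7, 0)$ has $\inte_{\at}(\mu) = (0.7, 0)$. Every nonzero open set is positive at $q$, so $\inte(\mu) = \bar{0}$.

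So treating Theorem~\ref{thm:closure_props}(e) as a black box cannot close (f); no argument can without a stronger hypothesis. One hypothesis that works is $\inte_{\at}(\mu) \in \tilde{\tau}$, which holds for instance when $\tilde{\tau} = I^X$. Under it, (b) immediately gives $\inte_{\at}(\mu) \leq \inte(\mu)$, since $\inte_{\at}(\mu)$ is then a fuzzy open set below $\mu$.
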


\begin{proof}
(a) $\inte_{\at}(\bar{1})(x) = \inf_y \max\{1 - \at(x)(y), 1\} = 1$.

(b) $\inte_{\at}(\mu)(x) \leq \max\{1 - \at(x)(x), \mu(x)\} = \max\{0, \mu(x)\} = \mu(x)$.

(c) If $\mu \leq \nu$, then $\max\{1 - \at(x)(y), \mu(y)\} \leq \max\{1 - \at(x)(y), \nu(y)\}$.

(d) We compute:
\begin{align*}
\inte_{\at}(\mu \wedge \nu)(x) &= \inf_y \max\{1 - \at(x)(y), \min\{\mu(y), \nu(y)\}\} \\
&= \inf_y \min\{\max\{1 - \at(x)(y), \mu(y)\}, \max\{1 - \at(x)(y), \nu(y)\}\} \\
&= \min\{\inte_{\at}(\mu)(x), \inte_{\at}(\nu)(x)\}.
\end{align*}

(e) We verify:
\begin{align*}
(\cl_{\at}(\mu^c))^c(x) &= 1 - \sup_y \min\{\at(x)(y), 1 - \mu(y)\} \\
&= \inf_y (1 - \min\{\at(x)(y), 1 - \mu(y)\}) \\
&= \inf_y \max\{1 - \at(x)(y), \mu(y)\} = \inte_{\at}(\mu)(x).
\end{align*}

(f) Follows from Theorem~\ref{thm:closure_props}(e) by duality.
\end{proof}

\begin{proposition}\label{prop:union_interior}
$\inte_{\at}(\mu \vee \nu) \geq \inte_{\at}(\mu) \vee \inte_{\at}(\nu)$. The inequality can be strict.
\end{proposition}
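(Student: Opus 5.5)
The inequality follows from monotonicity alone. Since $\mu \leq \mu \vee \nu$ and $\nu \leq \mu \vee \nu$, Theorem~\ref{thm:interior_props}(c) gives $\inte_{\at}(\mu) \leq \inte_{\at}(\mu \vee \nu)$ and $\inte_{\at}(\nu) \leq \inte_{\at}(\mu \vee \nu)$. Taking the pointwise maximum yields $\inte_{\at}(\mu) \vee \inte_{\at}(\nu) \leq \inte_{\at}(\mu \vee \nu)$.

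An alternative route uses the duality in Theorem~\ref{thm:interior_props}(e) together with the inequality $\cl_{\at}(\mu^c \wedge \nu^c) \leq \cl_{\at}(\mu^c) \wedge \cl_{\at}(\nu^c)$, which also follows from monotonicity. The direct argument above is shorter, so I would use it.

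For strictness I would build a small explicit example. The reason strictness can occur is that the infimum in \eqref{eq:interior} is taken over a maximum, and $\inf_y \max\{1-\at(x)(y), \max\{\mu(y),\nu(y)\}\}$ can exceed both separate infima. This happens when $\mu$ is small at one point of the aura and $\nu$ is small at a different point.

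Take $X=\{p,q\}$, $\tilde{\tau}=I^X$, and $\at(p)=\at(q)=\bar{1}$. Then $\inte_{\at}(\mu)(x)=\inf_y \mu(y)$ for every $x$. Let $\mu=(1,0)$ and $\nu=(0,1)$. Then $\inte_{\at}(\mu)=\inte_{\at}(\nu)=\bar{0}$, while $\mu\vee\nu=\bar{1}$ and so $\inte_{\at}(\mu\vee\nu)=\bar{1}$ by Theorem~\ref{thm:interior_props}(a). This is a strict inequality at every point.

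To show strictness is not merely a crisp phenomenon, I could add a genuinely fuzzy variant, for instance $\mu=(0.8,0.2)$ and $\nu=(0.2,0.8)$. This gives $\inte_{\at}(\mu)\vee\inte_{\at}(\nu)=\bar{0.2}$ against $\inte_{\at}(\mu\vee\nu)=\bar{0.8}$. Since $\bar{0.2}$ is not a defined macro, I would write it as the constant $0.2$.

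The only real step is choosing the counterexample. It must satisfy the scope axiom $\at(x)(x)=1$ with $\at(x)\in\tilde{\tau}$, which holds trivially with $\tilde{\tau}=I^X$ and $\at\equiv\bar{1}$. I do not expect any genuine obstacle.
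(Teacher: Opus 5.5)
Your proof is correct and follows the paper's route: the paper proves the inequality by the same pointwise comparison of infima that underlies monotonicity, Theorem~\ref{thm:interior_props}(c), and then takes the maximum. For strictness the paper only remarks that one should choose $\mu,\nu$ whose infima are attained at different points, so your explicit example supplies the witness the paper omits: the two-point space with $\at \equiv \bar{1}$, where $\inte_{\at}(\mu)(x)=\min_y \mu(y)$, together with $\mu=(1,0)$, $\nu=(0,1)$ or their fuzzy variants.
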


\begin{proof}
$\inte_{\at}(\mu \vee \nu)(x) = \inf_y \max\{1 - \at(x)(y), \max\{\mu(y), \nu(y)\}\} \geq \inf_y \max\{1 - \at(x)(y), \mu(y)\} = \inte_{\at}(\mu)(x)$. Similarly $\inte_{\at}(\mu \vee \nu) \geq \inte_{\at}(\nu)$, so $\inte_{\at}(\mu \vee \nu) \geq \inte_{\at}(\mu) \vee \inte_{\at}(\nu)$. Strictness follows by choosing $\mu, \nu$ where the infima are attained at different points.
\end{proof}

\subsection{Fuzzy Aura Topology}

\begin{definition}\label{def:aura_topology}
A fuzzy set $\mu$ is \emph{fuzzy $\at$-open} if $\inte_{\at}(\mu) = \mu$. The collection is $\tilde{\tau}_{\at}$.
\end{definition}

\begin{theorem}\label{thm:aura_top}
$\tilde{\tau}_{\at}$ is a fuzzy topology on $X$ with $\tilde{\tau}_{\at} \subseteq \tilde{\tau}$.
\end{theorem}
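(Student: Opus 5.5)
The plan is to check the three axioms (FT1)--(FT3) for $\tilde{\tau}_{\at}$ directly from the properties of $\inte_{\at}$ in Theorem~\ref{thm:interior_props}, and then to look at the inclusion $\tilde{\tau}_{\at} \subseteq \tilde{\tau}$ separately. I expect the inclusion to be the real obstacle, because as stated it appears to be false without an extra hypothesis.

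\textbf{The axioms (FT1)--(FT3).} For (FT1), $\inte_{\at}(\bar{1}) = \bar{1}$ is Theorem~\ref{thm:interior_props}(a). For $\bar{0}$ I compute directly:
\[
\inte_{\at}(\bar{0})(x) = \inf_y \bigl(1 - \at(x)(y)\bigr) = 1 - \sup_y \at(x)(y) = 0,
\]
since $\at(x)(x) = 1$. For (FT2), if $\inte_{\at}(\lambda) = \lambda$ and $\inte_{\at}(\mu) = \mu$, then Theorem~\ref{thm:interior_props}(d) gives $\inte_{\at}(\lambda \wedge \mu) = \lambda \wedge \mu$. For (FT3), let $\{\mu_i\}_{i \in J} \subseteq \tilde{\tau}_{\at}$. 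Monotonicity, Theorem~\ref{thm:interior_props}(c), gives $\inte_{\at}(\bigvee_i \mu_i) \geq \inte_{\at}(\mu_j) = \mu_j$ for each $j$, hence $\inte_{\at}(\bigvee_i \mu_i) \geq \bigvee_i \mu_i$. The reverse inequality is Theorem~\ref{thm:interior_props}(b). Note that the argument for joins uses only monotonicity, not Proposition~\ref{prop:union_interior}, so the possible strictness there causes no difficulty.

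\textbf{The inclusion $\tilde{\tau}_{\at} \subseteq \tilde{\tau}$.} This does not follow from the axioms of a fuzzy $\at$-space alone. Take $\tilde{\tau} = \{\bar{0}, \bar{1}\}$ and $\at(x) = \bar{1}$ for every $x$. Then
\[
\inte_{\at}(\mu)(x) = \inf_y \mu(y),
\]
so every constant $\bar{\alpha}$ is fuzzy $\at$-open, but $\bar{\alpha} \notin \tilde{\tau}$ for $0 < \alpha < 1$. (This is the dual of Lemma~\ref{lem:constant}.)

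So I would prove the inclusion only under the neighbourhood-generating hypothesis of Theorem~\ref{thm:closure_props}(e), via Theorem~\ref{thm:interior_props}(f). If $\mu = \inte_{\at}(\mu)$, then (f) gives $\mu = \inte_{\at}(\mu) \leq \inte(\mu) \leq \mu$. Hence $\mu = \inte(\mu)$, and $\inte(\mu)$ is a join of members of $\tilde{\tau}$, so $\mu \in \tilde{\tau}$ by (FT3).

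If the hypothesis must be avoided, I would instead try to write a fixed point as a join of open sets built from the $\at(x)$. However, $\mu = \bigvee_x \bigl(\at(x) \wedge \overline{\mu(x)}\bigr)$ uses constants, which are not open in a Chang-type topology. I would therefore state the inclusion with the hypothesis from Theorem~\ref{thm:closure_props}(e) made explicit, and note the counterexample above.
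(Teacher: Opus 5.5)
Your verification of (FT1)--(FT3) is correct and matches the paper's argument. Your counterexample is also correct, so the statement as written is false. With $\tilde{\tau}=\{\bar 0,\bar 1\}$ and $\at\equiv\bar 1$, every constant $\bar\alpha$ is fuzzy $\at$-open (as Proposition~\ref{prop:trivial} itself records) but is not open.

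The paper proves the inclusion by writing each fixed point as $\mu=\bigvee_x(\at(x)\wedge\overline{\mu(x)})$. This breaks where you say, since constants need not be Chang-open. It also breaks one step earlier, so do not take that identity for granted as your last paragraph does. The fixed-point condition gives only $\at(x)(y)>1-\mu(x)\Rightarrow\mu(y)\ge\mu(x)$; it does not give $\at(x)\wedge\overline{\mu(x)}\le\mu$. For example, on $X=\{p,q\}$ with $\at(p)=(1,0.5)$ and $\at(q)=\bar 1$, the set $\mu=(0.5,0)$ satisfies $\inte_{\at}(\mu)=\mu$, while $\bigvee_x(\at(x)\wedge\overline{\mu(x)})=(0.5,0.5)$. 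If $\tilde\tau$ is generated by all constants together with $(1,0.5)$, every member $\nu$ satisfies $\nu(q)\ge\min\{0.5,\nu(p)\}$. Hence $\mu\notin\tilde\tau$, and the inclusion fails even when all constants are open.

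\textbf{The gap is in your repair.} You cannot invoke Theorem~\ref{thm:interior_props}(f), or Theorem~\ref{thm:closure_props}(e) behind it. Your own example meets the hypothesis as the paper phrases it: $\at(x)=\bar 1$ is the smallest, indeed the only, open fuzzy set with value $1$ at $x$. The two-point example above meets it as well. Yet for $0<\alpha<1$ one has $\inte_{\at}(\bar\alpha)=\bar\alpha\not\le\bar 0=\inte(\bar\alpha)$ and $\cl(\bar\alpha)=\bar 1\not\le\bar\alpha=\cl_{\at}(\bar\alpha)$. So the example you used against the original statement also refutes your conditional version. (The paper's proof of (e) derives only $\cl_{\at}(\mu)(x)>0$ from $\cl(\mu)(x)>0$ and then simply asserts the inequality.)

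A hypothesis that does work is the following: $\tilde\tau$ contains all constants and all $\chi_U$ with $U=\{y:\at(x)(y)>t\}$, for $x\in X$ and $t\in[0,1]$. Examples are $\tilde\tau=I^X$, or Lowen's topology of lower semicontinuous functions for a crisp topology. By the implication above, every $\at$-open $\mu$ then equals $\bigvee_x\bigl(\overline{\mu(x)}\wedge\chi_{\{y:\,\at(x)(y)>1-\mu(x)\}}\bigr)$, which lies in $\tilde\tau$. Without such a hypothesis, only the first half of the statement holds.
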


\begin{proof}
$\bar{0}, \bar{1} \in \tilde{\tau}_{\at}$ by Theorem~\ref{thm:interior_props}(a,b). Finite intersection: $\inte_{\at}(\mu \wedge \nu) = \inte_{\at}(\mu) \wedge \inte_{\at}(\nu) = \mu \wedge \nu$. Arbitrary union: for $\mu = \bigvee_i \mu_i$ with each $\mu_i \in \tilde{\tau}_{\at}$, $\inte_{\at}(\mu) \leq \mu$. For the reverse, $\mu_i = \inte_{\at}(\mu_i) \leq \inte_{\at}(\mu)$ by monotonicity, so $\mu = \bigvee_i \mu_i \leq \inte_{\at}(\mu)$.

For $\tilde{\tau}_{\at} \subseteq \tilde{\tau}$: if $\inte_{\at}(\mu) = \mu$, then for every $x$, $\mu(x) = \inf_y \max\{1 - \at(x)(y), \mu(y)\}$. This means $\at(x)(y) > 1 - \mu(x) \Rightarrow \mu(y) \geq \mu(x)$, i.e., $\at(x) \wedge \overline{\mu(x)} \leq \mu$ where $\overline{\mu(x)}$ is the constant $\mu(x)$. Since $\at(x) \in \tilde{\tau}$, $\mu = \bigvee_{x \in X}(\at(x) \wedge \overline{\mu(x)}) \in \tilde{\tau}$.
\end{proof}

\begin{proposition}\label{prop:closed_char}
A fuzzy set $\gamma$ is fuzzy $\at$-closed (i.e., $\gamma^c \in \tilde{\tau}_{\at}$) if and only if $\cl_{\at}(\gamma) = \gamma$.
\end{proposition}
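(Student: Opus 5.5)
This follows directly from the duality in Theorem~\ref{thm:interior_props}(e) together with the involutive nature of fuzzy complementation, $(\mu^c)^c = \mu$. The plan is a chain of equivalences starting from the definition of fuzzy $\at$-closedness: $\gamma$ is fuzzy $\at$-closed iff $\gamma^c \in \tilde{\tau}_{\at}$, which by Definition~\ref{def:aura_topology} means $\inte_{\at}(\gamma^c) = \gamma^c$.

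Next I rewrite the left-hand side using Theorem~\ref{thm:interior_props}(e) with $\mu = \gamma^c$:
\[
\inte_{\at}(\gamma^c) = \bigl(\cl_{\at}((\gamma^c)^c)\bigr)^c = (\cl_{\at}(\gamma))^c .
\]
So the condition becomes $(\cl_{\at}(\gamma))^c = \gamma^c$.

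Complementation $\mu \mapsto \mu^c$ is a bijection of $I^X$, since $1-(1-t)=t$ pointwise. Applying it to both sides therefore gives the equivalence $(\cl_{\at}(\gamma))^c = \gamma^c \iff \cl_{\at}(\gamma) = \gamma$. Every step is reversible, so both directions of the ``if and only if'' are established at once.

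No step is a real obstacle. The only point needing care is that the duality formula in Theorem~\ref{thm:interior_props}(e) holds for every fuzzy set. That is true: its proof uses only the pointwise identities $1-\sup_y a_y = \inf_y(1-a_y)$ and $1-\min\{a,b\}=\max\{1-a,1-b\}$, both valid in $[0,1]$ with arbitrary index sets. It can therefore be applied to $\gamma^c$ without extra hypotheses. As an optional remark, one could add that by extensivity (Theorem~\ref{thm:closure_props}(b)) the condition is equivalent to $\cl_{\at}(\gamma) \leq \gamma$, but this is not needed for the statement.
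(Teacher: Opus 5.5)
Your proof is correct and matches the paper's argument: the same chain $\gamma^c \in \tilde{\tau}_{\at} \iff \inte_{\at}(\gamma^c) = \gamma^c \iff (\cl_{\at}(\gamma))^c = \gamma^c \iff \cl_{\at}(\gamma) = \gamma$. It rests on the duality in Theorem~\ref{thm:interior_props}(e) and on complementation being an involution.
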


\begin{proof}
$\gamma^c \in \tilde{\tau}_{\at}$ if and only if $\inte_{\at}(\gamma^c) = \gamma^c$ if and only if $(\cl_{\at}(\gamma))^c = \gamma^c$ if and only if $\cl_{\at}(\gamma) = \gamma$.
\end{proof}

\subsection{Transfinite Iteration}

\begin{definition}\label{def:transfinite}
For $\mu \in I^X$: $\cl^0_{\at}(\mu) = \mu$, $\cl^{n+1}_{\at}(\mu) = \cl_{\at}(\cl^n_{\at}(\mu))$, $\cl^\infty_{\at}(\mu) = \bigvee_{n=0}^{\infty} \cl^n_{\at}(\mu)$.
\end{definition}

\begin{theorem}\label{thm:transfinite}
$\cl^\infty_{\at}$ is a fuzzy Kuratowski closure operator with $\tilde{\tau}^\infty_{\at} \subseteq \tilde{\tau}_{\at} \subseteq \tilde{\tau}$.
\end{theorem}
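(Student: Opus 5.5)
The plan is to show that $\cl_{\at}$ preserves arbitrary suprema, so that the countable iteration in Definition~\ref{def:transfinite} already stabilises at stage $\omega$. The remaining Kuratowski axioms then pass to $\cl^\infty_{\at}$ from Theorem~\ref{thm:closure_props} by induction and taking suprema. The inclusion of topologies will come from comparing fixed points.

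\emph{Step 1: $\cl_{\at}$ commutes with suprema.} For any family $\{\nu_i\}_{i\in J}\subseteq I^X$ I will show $\cl_{\at}(\bigvee_i \nu_i)=\bigvee_i \cl_{\at}(\nu_i)$. This uses the identity $\min\{a,\sup_i b_i\}=\sup_i\min\{a,b_i\}$, valid in $[0,1]$ because $[0,1]$ is a complete chain. Writing
\[
\cl_{\at}\Bigl(\bigvee_i\nu_i\Bigr)(x)=\sup_y\sup_i\min\{\at(x)(y),\nu_i(y)\},
\]
and interchanging the two suprema gives the claim. This is the crux of the whole argument. Without it, one would have to iterate through ordinals beyond $\omega$; with it, the $\omega$-stage suffices. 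I will therefore spend the most care on this step, including the limit case where $\sup_i b_i$ is not attained.

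\emph{Step 2: the Kuratowski axioms.}
\begin{enumerate}[(i)]
\item $\cl^\infty_{\at}(\bar 0)=\bar 0$, because every iterate of $\bar 0$ is $\bar 0$ by Theorem~\ref{thm:closure_props}(a).
\item Extensivity holds since $\cl^0_{\at}(\mu)=\mu$ is one of the terms in the supremum.
\item For monotonicity and additivity, I will first prove by induction on $n$, using Theorem~\ref{thm:closure_props}(c),(d), that each $\cl^n_{\at}$ is monotone and satisfies $\cl^n_{\at}(\mu\vee\nu)=\cl^n_{\at}(\mu)\vee\cl^n_{\at}(\nu)$. Taking the supremum over $n$ and interchanging suprema yields the same two properties for $\cl^\infty_{\at}$.
\item For idempotence, Step 1 gives
\[
\cl_{\at}(\cl^\infty_{\at}(\mu))=\bigvee_n\cl^{n+1}_{\at}(\mu)\le \cl^\infty_{\at}(\mu).
\]
Extensivity supplies the reverse inequality, so $\cl^\infty_{\at}(\mu)$ is a fixed point of $\cl_{\at}$. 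Any fixed point $\gamma$ of $\cl_{\at}$ satisfies $\cl^n_{\at}(\gamma)=\gamma$ for all $n$, hence $\cl^\infty_{\at}(\gamma)=\gamma$. Applying this to $\gamma=\cl^\infty_{\at}(\mu)$ gives $\cl^\infty_{\at}(\cl^\infty_{\at}(\mu))=\cl^\infty_{\at}(\mu)$.
\end{enumerate}

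\emph{Step 3: the topologies.} Let $\tilde{\tau}^\infty_{\at}=\{\lambda:\cl^\infty_{\at}(\lambda^c)=\lambda^c\}$. This is a fuzzy topology because $\cl^\infty_{\at}$ is a Kuratowski closure. If $\cl^\infty_{\at}(\gamma)=\gamma$, then
\[
\gamma\le\cl_{\at}(\gamma)\le\cl^\infty_{\at}(\gamma)=\gamma,
\]
so $\cl_{\at}(\gamma)=\gamma$. By Proposition~\ref{prop:closed_char}, $\gamma$ is fuzzy $\at$-closed, and hence $\tilde{\tau}^\infty_{\at}\subseteq\tilde{\tau}_{\at}$. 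Together with the fixed-point remark in Step 2(iv), this actually shows the two collections coincide. The inclusion $\tilde{\tau}_{\at}\subseteq\tilde{\tau}$ is Theorem~\ref{thm:aura_top}.
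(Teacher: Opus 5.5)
The Kuratowski part of your proposal is correct and follows the paper's route. The paper's $\varepsilon$-argument for $\cl_{\at}(\cl^\infty_{\at}(\mu))\le\cl^\infty_{\at}(\mu)$ is exactly your sup-interchange from Step~1, applied to the chain $\{\cl^n_{\at}(\mu)\}_n$. You also make explicit the passage from ``$\cl^\infty_{\at}(\mu)$ is a fixed point of $\cl_{\at}$'' to $\cl^\infty_{\at}\circ\cl^\infty_{\at}=\cl^\infty_{\at}$, which the paper leaves implicit. Your fixed-point comparison in Step~3 is right too: it gives $\tilde{\tau}^\infty_{\at}=\tilde{\tau}_{\at}$ for every scope function, so transitivity is not needed for the equality in Theorem~\ref{thm:transitive_idempotent}. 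The paper's proof does not address the inclusions at all.

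\textbf{The gap is your final sentence.} The inclusion $\tilde{\tau}_{\at}\subseteq\tilde{\tau}$ cannot be taken from Theorem~\ref{thm:aura_top}, because it is false in general, so this clause of the statement cannot be proved as written.
\begin{itemize}
\item \emph{Counterexample.} Take any $X$ with the indiscrete Chang topology $\tilde{\tau}=\{\bar 0,\bar 1\}$ and the trivial scope $\at(x)=\bar 1$. By Lemma~\ref{lem:constant}, each constant $\bar\alpha$ is fuzzy $\at$-closed. So for $0<\alpha<1$, the constant with value $1-\alpha$ lies in $\tilde{\tau}_{\at}$ (cf.\ Proposition~\ref{prop:trivial}) but not in $\tilde{\tau}$.
\item \emph{Where the proof of Theorem~\ref{thm:aura_top} fails.} The faulty step is the ``i.e.''. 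The condition $\inte_{\at}(\mu)=\mu$ only forces $\mu(y)\ge\mu(x)$ at those $y$ with $\at(x)(y)>1-\mu(x)$, and this does not yield $\at(x)\wedge\overline{\mu(x)}\le\mu$. Even when that inequality holds, $\at(x)\wedge\overline{\mu(x)}$ need not lie in a Chang topology without constants.
\item \emph{Adding constants does not help.} On $X=\{p,q\}$ take $\at(p)=(1,0.5)$ and $\at(q)=(0.5,1)$. For $\mu=(0.5,0)$ one checks $\inte_{\at}(\mu)=\mu$. However, every member $\lambda$ of the fuzzy topology generated by $\at(p)$, $\at(q)$ and all constants satisfies $\lambda(q)\ge\min\{\lambda(p),0.5\}$, which $\mu$ violates.
\end{itemize}

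What your argument actually establishes is that $\cl^\infty_{\at}$ is a Kuratowski closure with $\tilde{\tau}^\infty_{\at}=\tilde{\tau}_{\at}$. The inclusion into $\tilde{\tau}$ needs extra hypotheses. One sufficient set is $\at$ crisp and $\tilde{\tau}$ containing all constant fuzzy sets; then the decomposition $\mu=\bigvee_x(\at(x)\wedge\overline{\mu(x)})$ is valid.
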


\begin{proof}
Properties (a)--(d) inherit from $\cl_{\at}$. For idempotency: $\cl_{\at}(\cl^\infty_{\at}(\mu))(x) = \sup_y \min\{\at(x)(y), \sup_n \cl^n_{\at}(\mu)(y)\}$. For any $\varepsilon > 0$, there exist $y_0, k$ with $\min\{\at(x)(y_0), \cl^k_{\at}(\mu)(y_0)\} > \cl_{\at}(\cl^\infty_{\at}(\mu))(x) - \varepsilon$. Then $\cl_{\at}(\cl^\infty_{\at}(\mu))(x) \leq \cl^{k+1}_{\at}(\mu)(x) + \varepsilon \leq \cl^\infty_{\at}(\mu)(x) + \varepsilon$. Since $\varepsilon$ is arbitrary, $\cl_{\at}(\cl^\infty_{\at}(\mu)) \leq \cl^\infty_{\at}(\mu)$. Extensivity gives equality.
\end{proof}

\begin{remark}
If $X$ is finite with $|X| = n$, then $\cl^n_{\at}(\mu) = \cl^\infty_{\at}(\mu)$.
\end{remark}

\subsection{Special Types of Fuzzy Aura Functions}

\begin{definition}\label{def:special_types}
$\at$ is \emph{trivial} if $\at(x) = \bar{1}$ for all $x$; \emph{crisp} if $\at(x) \in \{0,1\}^X$ for all $x$; \emph{symmetric} if $\at(x)(y) = \at(y)(x)$; \emph{transitive} if $\min\{\at(x)(y), \at(y)(z)\} \leq \at(x)(z)$ for all $x, y, z$.
\end{definition}

\begin{proposition}\label{prop:trivial}
If $\at$ is trivial, then $\cl_{\at}(\mu)(x) = \sup_y \mu(y)$ and $\tilde{\tau}_{\at} = \{\bar{0}\} \cup \{\bar{\alpha} : \alpha \in [0,1]\}$.
\end{proposition}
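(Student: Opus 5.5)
The plan is to compute the closure directly from its definition, and then to describe $\tilde{\tau}_{\at}$ through the fixed-point characterization of $\at$-open sets (Definition~\ref{def:aura_topology}) together with the duality $\inte_{\at}(\mu) = (\cl_{\at}(\mu^c))^c$ from Theorem~\ref{thm:interior_props}(e).

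\emph{Step 1: the closure.} When $\at(x) = \bar{1}$ for every $x$, formula~\eqref{eq:closure} gives
\[
\cl_{\at}(\mu)(x) = \sup_y \min\{1, \mu(y)\} = \sup_y \mu(y).
\]
This value does not depend on $x$, so $\cl_{\at}(\mu)$ is the constant fuzzy set $\overline{\sup \mu}$.

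\emph{Step 2: the interior.} In the same way, Definition~\ref{def:fuzzy_interior} gives
\[
\inte_{\at}(\mu)(x) = \inf_y \max\{0, \mu(y)\} = \inf_y \mu(y),
\]
so $\inte_{\at}(\mu)$ is the constant fuzzy set $\overline{\inf \mu}$.

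\emph{Step 3: the open sets.} A fuzzy set $\mu$ is $\at$-open exactly when $\mu = \inte_{\at}(\mu)$, that is, when $\mu = \overline{\inf \mu}$.
\begin{itemize}
\item If $\mu$ is $\at$-open, it equals a constant, so $\mu = \bar{\alpha}$ for some $\alpha \in [0,1]$.
\item Conversely, every constant $\bar{\alpha}$ satisfies $\inte_{\at}(\bar{\alpha}) = \overline{\inf \bar{\alpha}} = \bar{\alpha}$. This also follows by duality from Lemma~\ref{lem:constant}, since $\bar{\alpha}^c = \overline{1-\alpha}$ is constant and hence $\cl_{\at}$-fixed.
\end{itemize}
Therefore $\tilde{\tau}_{\at} = \{\bar{\alpha} : \alpha \in [0,1]\}$. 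This family already contains $\bar{0}$, which is why the union with $\{\bar{0}\}$ in the statement adds nothing. The family is closed under arbitrary suprema and finite infima, as Theorem~\ref{thm:aura_top} requires.

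\emph{Main obstacle: the inclusion $\tilde{\tau}_{\at} \subseteq \tilde{\tau}$.} Theorem~\ref{thm:aura_top} asserts this inclusion, but a fuzzy scope function only forces $\bar{1} = \at(x) \in \tilde{\tau}$. It does not force the constants $\bar{\alpha}$ to lie in $\tilde{\tau}$. The proof of Theorem~\ref{thm:aura_top} silently uses the constants when it writes $\at(x) \wedge \overline{\mu(x)}$. So I would state the description of $\tilde{\tau}_{\at}$ as the fixed-point family computed in Step 3, under the paper's definitions. I would also add a remark that $\tilde{\tau}_{\at} \subseteq \tilde{\tau}$ holds exactly when $\tilde{\tau}$ contains all constant fuzzy sets, i.e.\ when $\tilde{\tau}$ is a Lowen-type (fully stratified) fuzzy topology.
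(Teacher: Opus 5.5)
Your computation is correct. The paper states this proposition without proof, so your direct evaluation of $\cl_{\at}$ and $\inte_{\at}$ at $\at(x)=\bar{1}$ is exactly the intended verification, and you are right that the extra $\{\bar{0}\}$ is redundant. Your ``main obstacle'' is a genuine defect of the paper, not of your argument. The trivial aura is a legitimate scope function on every Chang space, e.g.\ the indiscrete $\tilde{\tau}=\{\bar{0},\bar{1}\}$ or the space of Example~\ref{ex:finite}. There $\overline{0.5}\in\tilde{\tau}_{\at}\setminus\tilde{\tau}$, which contradicts Theorem~\ref{thm:aura_top} for precisely the reason you give.

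One caution about the remark you want to add. For the trivial aura, ``$\tilde{\tau}_{\at}\subseteq\tilde{\tau}$ iff $\tilde{\tau}$ contains all constants'' is right. The ``only if'' half even holds for every scope function, since constants are always $\at$-open by the dual of Lemma~\ref{lem:constant}. Do not, however, offer stratification as a general repair of Theorem~\ref{thm:aura_top}, because the missing constants are not its only gap. The fixed-point condition $\mu(x)\le\max\{1-\at(x)(y),\mu(y)\}$ does not imply $\at(x)\wedge\overline{\mu(x)}\le\mu$.

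For a counterexample, take $X=\{p,q\}$ with $\at(p)=(1,0.5)$, $\at(q)=(0,1)$, and let $\tilde{\tau}$ be generated by $\at(p)$, $\at(q)$ and all constants. The set $(0.5,0)$ is $\at$-open. Yet every $\lambda\in\tilde{\tau}$ satisfies $\lambda(p)>0\Rightarrow\lambda(q)>0$, since this holds for the generators and is preserved by finite meets and arbitrary joins. Hence $(0.5,0)\notin\tilde{\tau}$ even though $\tilde{\tau}$ is stratified.
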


\begin{proposition}\label{prop:symmetric}
If $\at$ is symmetric, then $\cl_{\at}(\chi_{\{y\}})(x) = \at(x)(y) = \at(y)(x) = \cl_{\at}(\chi_{\{x\}})(y)$.
\end{proposition}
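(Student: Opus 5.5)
The argument is a direct computation from Definition~\ref{def:fuzzy_closure}, applied to the crisp singleton $\chi_{\{y\}}$. The only feature needed is that $\chi_{\{y\}}$ vanishes off $y$. Since $\min$ with $0$ is $0$, the supremum defining the closure collapses to a single term.

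\textbf{Step 1.} Fix $x,y\in X$. For every $z\neq y$ we have $\min\{\at(x)(z),\chi_{\{y\}}(z)\}=\min\{\at(x)(z),0\}=0$. At $z=y$ the term is $\min\{\at(x)(y),1\}=\at(x)(y)$. All terms lie in $[0,1]$, so the supremum over $z$ equals $\max\{0,\at(x)(y)\}=\at(x)(y)$. Hence
\[
\cl_{\at}(\chi_{\{y\}})(x)=\at(x)(y).
\]
This identity uses only the definition and holds for any fuzzy scope function, symmetric or not.

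\textbf{Step 2.} The symmetry hypothesis of Definition~\ref{def:special_types} gives $\at(x)(y)=\at(y)(x)$.

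\textbf{Step 3.} Apply Step~1 with the roles of $x$ and $y$ interchanged. This gives $\at(y)(x)=\cl_{\at}(\chi_{\{x\}})(y)$, and chaining the three equalities yields the statement.

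There is no genuine obstacle here. The only point needing care is the supremum in Step~1 when $X$ is infinite. It is harmless because all but one term is exactly $0$ and $\at(x)(y)\ge 0$, so the supremum is attained at $z=y$. I might also remark that the converse holds: if $\cl_{\at}(\chi_{\{y\}})(x)=\cl_{\at}(\chi_{\{x\}})(y)$ for all $x,y$, then $\at$ is symmetric, by the same identity from Step~1.
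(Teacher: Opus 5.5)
Your proof is correct and is essentially the paper's approach: the paper states this proposition without a written proof, but the proof of Corollary~\ref{cor:T1_aura} uses exactly your Step~1 evaluation $\cl_{\at}(\chi_{\{x\}})(y)=\at(y)(x)$. The symmetry hypothesis then supplies the middle equality, just as in your Steps~2--3.
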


\begin{theorem}\label{thm:transitive_idempotent}
If $\at$ is transitive, then $\cl_{\at}$ is idempotent and $\tilde{\tau}^\infty_{\at} = \tilde{\tau}_{\at}$.
\end{theorem}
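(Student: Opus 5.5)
The argument computes $\cl_{\at}(\cl_{\at}(\mu))$ directly from the definition, reduces idempotency to the transitivity inequality, and then reads off the equality of topologies from Theorem~\ref{thm:transfinite}.

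\textbf{Step 1: expand the double closure.} By Definition~\ref{def:fuzzy_closure},
\[
\cl_{\at}(\cl_{\at}(\mu))(x) = \sup_{y}\min\Bigl\{\at(x)(y), \sup_{z}\min\{\at(y)(z),\mu(z)\}\Bigr\}.
\]
Since $\min\{a,\cdot\}$ commutes with suprema on $[0,1]$ (the same distributivity used in Lemma~\ref{lem:cap_constant}), this becomes
\[
\sup_{y,z}\min\{\at(x)(y),\at(y)(z),\mu(z)\}.
\]

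\textbf{Step 2: apply transitivity.} Transitivity gives $\min\{\at(x)(y),\at(y)(z)\}\le \at(x)(z)$. Hence each term in the double supremum is at most $\min\{\at(x)(z),\mu(z)\}\le \cl_{\at}(\mu)(x)$. Taking the supremum over $y$ and $z$ yields $\cl_{\at}(\cl_{\at}(\mu))\le\cl_{\at}(\mu)$. The reverse inequality is extensivity, Theorem~\ref{thm:closure_props}(b), applied to $\cl_{\at}(\mu)$. So $\cl_{\at}$ is idempotent.

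\textbf{Step 3: the topologies coincide.} Idempotency gives $\cl^n_{\at}(\mu)=\cl_{\at}(\mu)$ for all $n\ge1$ by induction. Since $\mu\le\cl_{\at}(\mu)$, it follows that $\cl^\infty_{\at}(\mu)=\mu\vee\cl_{\at}(\mu)=\cl_{\at}(\mu)$. The two closure operators are therefore equal, so their fixed-point families (the closed fuzzy sets) agree. I read $\tilde{\tau}^\infty_{\at}$ as the complements of the $\cl^\infty_{\at}$-fixed sets, mirroring Proposition~\ref{prop:closed_char}. Taking complements then gives $\tilde{\tau}^\infty_{\at}=\tilde{\tau}_{\at}$.

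\textbf{Main obstacle.} The only delicate point is the interchange of $\min$ with an arbitrary supremum in Step 1. On the complete chain $[0,1]$ this holds, since $\min\{a,\sup_i b_i\}=\sup_i\min\{a,b_i\}$. I would state this explicitly, arguing with an $\varepsilon$ as in the proof of Theorem~\ref{thm:transfinite}. No finiteness of $X$ is needed.
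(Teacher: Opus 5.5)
Your proof is correct and follows the paper's argument: you expand $\cl_{\at}(\cl_{\at}(\mu))(x)$ into $\sup_{y,z}\min\{\at(x)(y),\at(y)(z),\mu(z)\}$, bound it by $\cl_{\at}(\mu)(x)$ using transitivity, and close with extensivity. Your Step~3 spells out $\tilde{\tau}^\infty_{\at}=\tilde{\tau}_{\at}$, which the paper leaves implicit; this equality of fixed-point families in fact holds without transitivity, because $\cl_{\at}(\gamma)\le\cl^\infty_{\at}(\gamma)$ forces $\cl_{\at}(\gamma)=\gamma$ whenever $\cl^\infty_{\at}(\gamma)=\gamma$, and the converse is trivial.
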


\begin{proof}
$\cl_{\at}(\cl_{\at}(\mu))(x) = \sup_y \min\{\at(x)(y), \sup_z \min\{\at(y)(z), \mu(z)\}\} = \sup_{y,z} \min\{\at(x)(y), \at(y)(z), \mu(z)\} \leq \sup_z \min\{\at(x)(z), \mu(z)\} = \cl_{\at}(\mu)(x)$, using transitivity. Extensivity gives equality.
\end{proof}

\begin{theorem}\label{thm:sym_trans}
If $\at$ is both symmetric and transitive, then $\at$ defines a fuzzy equivalence relation on $X$, and the fuzzy aura approximation operators coincide with the Dubois--Prade operators.
\end{theorem}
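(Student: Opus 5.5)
The plan is to define the fuzzy relation $R_{\at}(x,y) = \at(x)(y)$ on $X$ and check the three axioms of a (max–min) fuzzy equivalence relation. Reflexivity is exactly the scope axiom \eqref{eq:scope_axiom}: $R_{\at}(x,x) = \at(x)(x) = 1$. Symmetry is the symmetry hypothesis of Definition~\ref{def:special_types}: $R_{\at}(x,y) = \at(x)(y) = \at(y)(x) = R_{\at}(y,x)$. Max–min transitivity, $\sup_y \min\{R_{\at}(x,y), R_{\at}(y,z)\} \leq R_{\at}(x,z)$, follows from the pointwise transitivity condition by taking the supremum over $y$. This settles the first assertion. (Note that only reflexivity is used for the Dubois--Prade definition itself, which requires $R(x,x)=1$.)

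For the second assertion, I take the fuzzy aura approximation operators to be $\cl_{\at}$ and $\inte_{\at}$. If Section~7 introduces them under another name, I expect them to be defined by the same formulas \eqref{eq:closure} and \eqref{eq:interior}. Substituting $R = R_{\at}$ into the Dubois--Prade formulas gives, for every $\mu \in I^X$ and $x \in X$,
\[
\overline{\apr}_{R_{\at}}(\mu)(x) = \sup_{y} \min\{\at(x)(y), \mu(y)\} = \cl_{\at}(\mu)(x),
\]
\[
\underline{\apr}_{R_{\at}}(\mu)(x) = \inf_{y} \max\{1 - \at(x)(y), \mu(y)\} = \inte_{\at}(\mu)(x).
\]
So the coincidence is literal equality of the defining formulas. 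The lower identity is also consistent with the duality in Theorem~\ref{thm:interior_props}(e).

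Finally, I would remark on what symmetry and transitivity add beyond the formal identification. Theorem~\ref{thm:transitive_idempotent} gives idempotency of $\cl_{\at}$, and by duality of $\inte_{\at}$. This matches the known fact that Dubois--Prade approximations with respect to a fuzzy similarity (equivalence) relation are idempotent. Symmetry gives the analogue of Proposition~\ref{prop:symmetric}, i.e.\ the classical relation $\overline{\apr}(\chi_{\{y\}})(x) = \overline{\apr}(\chi_{\{x\}})(y)$.

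The main obstacle is not computational but definitional: pinning down exactly which operators are meant by the ``fuzzy aura approximation operators.'' If Section~7 defines them through a composition, a level-set construction, or a restriction to $\tilde{\tau}$-open sets, I would first reduce that definition to \eqref{eq:closure} and \eqref{eq:interior}. For a level-set definition I would use Theorem~\ref{thm:alpha_level}, and only then perform the substitution above.
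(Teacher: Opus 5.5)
Your proposal is correct and follows the paper's proof: set $R(x,y)=\at(x)(y)$, read off reflexivity, symmetry and min-transitivity from the scope axiom and the hypotheses, and observe that the Dubois--Prade formulas become literally $\cl_{\at}$ and $\inte_{\at}$. Section~7 indeed defines $\overline{\apr}_{\at}$ and $\underline{\apr}_{\at}$ by exactly these formulas, and your write-up is slightly more complete than the paper's, which spells out the identification only for the upper approximation.
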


\begin{proof}
Define $R(x,y) = \at(x)(y)$. Reflexivity: $R(x,x) = \at(x)(x) = 1$. Symmetry: $R(x,y) = \at(x)(y) = \at(y)(x) = R(y,x)$. Transitivity: $\min\{R(x,y), R(y,z)\} \leq R(x,z)$. Then $\cl_{\at}(\mu)(x) = \sup_y \min\{R(x,y), \mu(y)\} = \overline{\apr}_R(\mu)(x)$.
\end{proof}

\section{Generalized Fuzzy Open Sets}

\begin{definition}\label{def:gen_open}
Let $(X, \tilde{\tau}, \at)$ be a fuzzy $\at$-space and $\mu \in I^X$. Then $\mu$ is called:
\begin{enumerate}[\upshape(a)]
\item \emph{fuzzy $\at$-semi-open} if $\mu \leq \cl_{\at}(\inte(\mu))$;
\item \emph{fuzzy $\at$-pre-open} if $\mu \leq \inte(\cl_{\at}(\mu))$;
\item \emph{fuzzy $\at$-$\alpha$-open} if $\mu \leq \inte(\cl_{\at}(\inte(\mu)))$;
\item \emph{fuzzy $\at$-$\beta$-open} if $\mu \leq \cl_{\at}(\inte(\cl_{\at}(\mu)))$;
\item \emph{fuzzy $\at$-$b$-open} if $\mu \leq \cl_{\at}(\inte(\mu)) \vee \inte(\cl_{\at}(\mu))$.
\end{enumerate}
Denoted: $\at$-$FSO(X)$, $\at$-$FPO(X)$, $\at$-$F\alpha O(X)$, $\at$-$F\beta O(X)$, $\at$-$FbO(X)$.
\end{definition}

\begin{theorem}[Complete Hierarchy]\label{thm:hierarchy}
In any fuzzy $\at$-space:
\[
\tilde{\tau} \subseteq \at\text{-}F\alpha O(X) \subseteq \begin{cases} \at\text{-}FSO(X) \\ \at\text{-}FPO(X) \end{cases} \subseteq \at\text{-}FbO(X) \subseteq \at\text{-}F\beta O(X).
\]
Moreover, each classical class embeds: $FSO(X) \subseteq \at$-$F\beta O(X)$, and similarly for the others.
\end{theorem}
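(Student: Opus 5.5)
The plan is to prove the chain of inclusions directly from the definitions in Definition~\ref{def:gen_open}. The tools are three elementary facts: $\inte(\mu)\le\mu$ together with monotonicity of $\inte$; extensivity and monotonicity of $\cl_{\at}$ from Theorem~\ref{thm:closure_props}(b),(c); and the fact that $\inte(\lambda)=\lambda$ for $\lambda\in\tilde{\tau}$. Each inclusion becomes a one- or two-line inequality chain, which I will handle in the order the hierarchy is displayed.

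\emph{Step 1: $\tilde{\tau}\subseteq \at$-$F\alpha O(X)$.} For $\mu\in\tilde{\tau}$ we have $\mu=\inte(\mu)$. By extensivity $\mu\le\cl_{\at}(\mu)=\cl_{\at}(\inte(\mu))$. Applying $\inte$ and using $\mu=\inte(\mu)$ gives $\mu\le\inte(\cl_{\at}(\inte(\mu)))$.

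\emph{Step 2: $\at$-$F\alpha O(X)\subseteq\at$-$FSO(X)$.} This is immediate from $\inte(\cl_{\at}(\inte\mu))\le\cl_{\at}(\inte\mu)$.

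\emph{Step 3: $\at$-$F\alpha O(X)\subseteq\at$-$FPO(X)$.} From $\inte\mu\le\mu$, monotonicity of $\cl_{\at}$ and then of $\inte$ gives $\inte(\cl_{\at}(\inte\mu))\le\inte(\cl_{\at}\mu)$.

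\emph{Step 4: $\at$-$FSO(X)$ and $\at$-$FPO(X)$ are contained in $\at$-$FbO(X)$.} This holds because each defining right-hand side is one of the two joinands in the definition of $\at$-$b$-openness.

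\emph{Step 5: $\at$-$FbO(X)\subseteq\at$-$F\beta O(X)$.} I will bound each joinand separately by $\cl_{\at}(\inte(\cl_{\at}\mu))$.
\begin{itemize}
\item For the first joinand, $\mu\le\cl_{\at}\mu$ gives $\inte\mu\le\inte(\cl_{\at}\mu)$, so $\cl_{\at}(\inte\mu)\le\cl_{\at}(\inte(\cl_{\at}\mu))$.
\item For the second joinand, $\inte(\cl_{\at}\mu)\le\cl_{\at}(\inte(\cl_{\at}\mu))$ by extensivity.
\end{itemize}
Taking the join of the two bounds finishes the chain.

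For the embeddings of the classical classes, the natural route is to compare $\cl$ with $\cl_{\at}$. If $\cl(\lambda)\le\cl_{\at}(\lambda)$ for all $\lambda$, then monotonicity gives $\cl(\inte(\cl\mu))\le\cl_{\at}(\inte(\cl_{\at}\mu))$. Hence $F\beta O(X)\subseteq\at$-$F\beta O(X)$. Since $FSO, FPO, F\alpha O\subseteq F\beta O$ classically, all classical classes then embed in $\at$-$F\beta O(X)$. Analogous substitutions give $FSO(X)\subseteq\at$-$FSO(X)$, $FPO(X)\subseteq\at$-$FPO(X)$ and $F\alpha O(X)\subseteq\at$-$F\alpha O(X)$, each time replacing an occurrence of $\cl$ by the larger $\cl_{\at}$.

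The main obstacle is exactly this comparison $\cl\le\cl_{\at}$. It is only available under the hypothesis of Theorem~\ref{thm:closure_props}(e), namely that $\at(x)$ generates the neighborhoods of $x$. It is not automatic from $\at(x)\in\tilde{\tau}$ alone. For the semi-open case one could try to avoid it by writing $\cl(\inte\mu)$ as a meet of closed sets, but there seems to be no general inequality between $\cl$ and $\cl_{\at}$. So I would state the embedding part under the condition of Theorem~\ref{thm:closure_props}(e) and invoke that result. The five-step chain above, by contrast, holds in every fuzzy $\at$-space.
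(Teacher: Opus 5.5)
Your five-step chain is the paper's proof essentially verbatim: the same inequalities, in the same order. It is correct in every fuzzy $\at$-space.

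The paper gives no argument at all for the ``Moreover'' clause, and your doubt about it is justified, because the clause is false in general. Take $X=\{p\}$, $\tilde{\tau}=\{\bar{0},\bar{0.3},\bar{1}\}$ and $\at(p)=\bar{1}$. Then $\cl_{\at}$ is the identity, so $\at$-$F\beta O(X)=\{\mu:\mu\le\inte(\mu)\}=\tilde{\tau}$. The only fuzzy closed sets are $\bar{0},\bar{0.7},\bar{1}$.
\begin{itemize}
\item For $\mu=\bar{0.6}$ we get $\cl(\inte(\mu))=\cl(\bar{0.3})=\bar{0.7}\ge\mu$. Hence $\mu\in FSO(X)\setminus\at$-$F\beta O(X)$, and a fortiori $\mu\notin\at$-$FSO(X)$.
\item Likewise $\inte(\cl(\bar{0.2}))=\inte(\bar{0.7})=\bar{0.3}$, which places $\bar{0.2}$ in $FPO(X)\setminus\at$-$F\beta O(X)$.
\end{itemize}

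Where your proposal goes wrong is the repair. Theorem~\ref{thm:closure_props}(e) does not deliver $\cl\le\cl_{\at}$. In the same example, $\at(p)=\bar{1}$ is the only open set taking the value $1$ at $p$, so the hypothesis of (e) holds under any reading. Yet $\cl(\bar{0.3})=\bar{0.7}\not\le\bar{0.3}=\cl_{\at}(\bar{0.3})$.

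The underlying reason is Lemma~\ref{lem:constant}: $\cl_{\at}$ fixes every constant fuzzy set, while constants need not be closed in a Chang topology. So the crisp argument ``every open neighbourhood of $x$, in particular $a(x)$, meets $A$'' has no graded analogue.

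Your derivation from $\cl(\lambda)\le\cl_{\at}(\lambda)$ is correct. Make that inequality itself the hypothesis of the embedding statement, instead of citing Theorem~\ref{thm:closure_props}(e). A checkable sufficient condition is that $\cl_{\at}(\lambda)$ be $\tilde{\tau}$-closed for every $\lambda$; this gives $\cl(\lambda)\le\cl_{\at}(\lambda)$ directly from the definition of $\cl$ as the meet of closed supersets.
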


\begin{proof}
$\tilde{\tau} \subseteq \at$-$F\alpha O$: Let $\mu \in \tilde{\tau}$, so $\inte(\mu) = \mu$. By extensivity, $\cl_{\at}(\mu) \geq \mu$, hence $\inte(\cl_{\at}(\inte(\mu))) = \inte(\cl_{\at}(\mu)) \geq \inte(\mu) = \mu$.

$\at$-$F\alpha O \subseteq \at$-$FSO$: $\mu \leq \inte(\cl_{\at}(\inte(\mu))) \leq \cl_{\at}(\inte(\mu))$.

$\at$-$F\alpha O \subseteq \at$-$FPO$: $\inte(\mu) \leq \mu$ gives $\cl_{\at}(\inte(\mu)) \leq \cl_{\at}(\mu)$, so $\mu \leq \inte(\cl_{\at}(\inte(\mu))) \leq \inte(\cl_{\at}(\mu))$.

$\at$-$FSO$, $\at$-$FPO \subseteq \at$-$FbO$: Immediate.

$\at$-$FbO \subseteq \at$-$F\beta O$: $\inte(\mu) \leq \inte(\cl_{\at}(\mu))$ (since $\mu \leq \cl_{\at}(\mu)$), so $\cl_{\at}(\inte(\mu)) \leq \cl_{\at}(\inte(\cl_{\at}(\mu)))$. Also $\inte(\cl_{\at}(\mu)) \leq \cl_{\at}(\inte(\cl_{\at}(\mu)))$ by extensivity. Hence $\mu \leq \cl_{\at}(\inte(\mu)) \vee \inte(\cl_{\at}(\mu)) \leq \cl_{\at}(\inte(\cl_{\at}(\mu)))$.
\end{proof}

\begin{example}\label{ex:hierarchy_sep}
Let $X = \{p, q, r, s\}$, $\tilde{\tau} = \{\bar{0}, \bar{1}, \lambda_1, \lambda_2, \lambda_1 \vee \lambda_2, \lambda_1 \wedge \lambda_2\}$ with $\lambda_1 = (0.8, 0, 0.5, 0)$, $\lambda_2 = (0, 0.7, 0, 0.6)$. Define $\at(p) = (1, 0.6, 0.3, 0)$, $\at(q) = (0.5, 1, 0, 0.4)$, $\at(r) = (0.3, 0, 1, 0.7)$, $\at(s) = (0, 0.4, 0.6, 1)$.

Take $\mu = (0.7, 0.5, 0, 0)$. Then $\inte(\mu) = \bar{0}$ (no fuzzy open set $\leq \mu$ except $\bar{0}$). So $\cl_{\at}(\inte(\mu)) = \bar{0}$ and $\mu$ is not fuzzy $\at$-semi-open.

However, $\cl_{\at}(\mu)(p) = \max\{\min(1, 0.7), \min(0.6, 0.5)\} = 0.7$, $\cl_{\at}(\mu)(q) = \max\{\min(0.5, 0.7), \min(1, 0.5)\} = 0.5$, $\cl_{\at}(\mu)(r) = \max\{\min(0.3, 0.7), \min(0, 0.5)\} = 0.3$, $\cl_{\at}(\mu)(s) = \max\{\min(0, 0.7), \min(0.4, 0.5)\} = 0.4$. So $\cl_{\at}(\mu) = (0.7, 0.5, 0.3, 0.4)$.

Since $\lambda_1 \wedge \lambda_2 = (0, 0, 0, 0) = \bar{0}$ and $\lambda_2 = (0, 0.7, 0, 0.6) \not\leq (0.7, 0.5, 0.3, 0.4)$ (as $0.7 > 0.5$ at $q$), we get $\inte(\cl_{\at}(\mu)) = \bar{0}$ and $\mu$ is also not fuzzy $\at$-pre-open. This demonstrates that not every fuzzy set is generalized open.
\end{example}

\begin{example}\label{ex:real_line}
Consider $(\R, \tilde{\tau}_u, \at_2)$ with $\at_2(x)(y) = \max\{0, 1 - |x-y|/2\}$. Let $\mu$ be the fuzzy set with $\mu(x) = \max\{0, 1 - |x|/3\}$ (triangular with support $(-3,3)$). Then $\inte(\mu) = \mu$ (since $\mu \in \tilde{\tau}_u$), and $\mu$ is fuzzy $\at$-$\alpha$-open by Theorem~\ref{thm:hierarchy}.
\end{example}

\begin{theorem}\label{thm:union_props}
Let $(X, \tilde{\tau}, \at)$ be a fuzzy $\at$-space.
\begin{enumerate}[\upshape(a)]
\item Arbitrary unions of fuzzy $\at$-semi-open sets are fuzzy $\at$-semi-open.
\item Arbitrary unions of fuzzy $\at$-pre-open sets are fuzzy $\at$-pre-open.
\item Finite intersections of fuzzy $\at$-$\alpha$-open sets are fuzzy $\at$-$\alpha$-open when $\at$ is transitive.
\end{enumerate}
\end{theorem}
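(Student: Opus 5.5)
Parts (a) and (b) follow from monotonicity alone; part (c) needs a "localization" inequality, and I expect that step to be the real difficulty.

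\textbf{Parts (a) and (b).} Let $\{\mu_i\}_{i\in J}$ be a family and put $\mu=\bigvee_i\mu_i$. Since $\mu_i\le\mu$ for every $i$, monotonicity of $\inte$ and of $\cl_{\at}$ (Theorem~\ref{thm:closure_props}(c)) gives
\[
\cl_{\at}(\inte(\mu_i))\le\cl_{\at}(\inte(\mu)), \qquad \inte(\cl_{\at}(\mu_i))\le\inte(\cl_{\at}(\mu)).
\]
For (a), $\mu_i\le\cl_{\at}(\inte(\mu_i))\le\cl_{\at}(\inte(\mu))$ for each $i$; taking the supremum over $i$ gives $\mu\le\cl_{\at}(\inte(\mu))$. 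For (b) the argument is identical with $\inte\circ\cl_{\at}$ in place of $\cl_{\at}\circ\inte$. Neither part needs additivity or any hypothesis on $\at$.

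\textbf{Part (c): reduction.} Let $\mu,\nu$ be fuzzy $\at$-$\alpha$-open and write $U=\inte(\cl_{\at}(\inte(\mu)))$ and $V=\inte(\cl_{\at}(\inte(\nu)))$, so $\mu\wedge\nu\le U\wedge V$. Because $\inte(\mu\wedge\nu)=\inte(\mu)\wedge\inte(\nu)$ and $U\wedge V$ is fuzzy open, it suffices to prove
\[
U\wedge V\le\cl_{\at}\bigl(\inte(\mu)\wedge\inte(\nu)\bigr),
\]
and then apply $\inte$ to both sides. Since $\at$ is transitive, Theorem~\ref{thm:transitive_idempotent} makes $\cl_{\at}$ idempotent. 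Hence $C_1=\cl_{\at}(\inte(\mu))$ and $C_2=\cl_{\at}(\inte(\nu))$ are $\cl_{\at}$-closed, with $U\le C_1$ and $V\le C_2$.

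\textbf{Part (c): the key lemma.} The plan is to prove the fuzzy analogue of the crisp fact $G\cap\cl A\subseteq\cl(G\cap A)$ for open $G$:
\[
\rho\wedge\cl_{\at}(\lambda)\le\cl_{\at}(\rho\wedge\lambda).
\]
I would apply it twice. First with $\rho=V$ and $\lambda=\inte(\mu)$, which gives $U\wedge V\le\cl_{\at}(V\wedge\inte(\mu))$. Then, inside the closure, with $\rho=\inte(\mu)$ and $\lambda=\inte(\nu)$, using $V\le C_2$. Finally idempotency collapses $\cl_{\at}\cl_{\at}$ to $\cl_{\at}$.

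Pointwise, the lemma asks, for $\cl_{\at}(\lambda)(x)$ nearly attained at some $y$, that $\rho(y)\gtrsim\min\{\at(x)(y),\rho(x)\}$. This is exactly the condition $\at(x)\wedge\overline{\rho(x)}\le\rho$ extracted in the proof of Theorem~\ref{thm:aura_top}. So the lemma holds for $\rho\in\tilde{\tau}_{\at}$, at least up to the strict/non-strict threshold issue, which I would handle by an $\varepsilon$-argument.

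\textbf{Main obstacle.} Here $\rho$ is only $\tilde{\tau}$-open, namely an ordinary interior, and $\tilde{\tau}_{\at}\subseteq\tilde{\tau}$ can be strict. I would first try to use transitivity to show that sets of the form $\inte(\cl_{\at}(\cdot))$, or at least the $\cl_{\at}$-closed sets $C_i$, carry enough $\at$-invariance to substitute $C_2$ for $V$ as the $\rho$ in the lemma. If that fails, I would test small examples on $X=\{p,q\}$ with a nonsymmetric transitive $\at$. I would then either add the hypothesis $\tilde{\tau}=\tilde{\tau}_{\at}$ (or symmetry of $\at$), or record a counterexample showing (c) needs it.
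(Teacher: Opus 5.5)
Parts (a) and (b) are correct and are exactly the paper's monotonicity argument. For (c), the obstacle you isolated cannot be overcome, because statement (c) is false. Your proposal, and any proof, necessarily stalls there. Take $X=\{p,q\}$, $\tilde{\tau}=\{\bar{0},\bar{1},\chi_{\{p\}},\chi_{\{q\}}\}$ and the trivial scope $\at(p)=\at(q)=\bar{1}$, which is transitive (and symmetric). Here $\cl_{\at}(\lambda)$ is the constant $\sup_y\lambda(y)$. For $\mu=(1,0.5)$ and $\nu=(0.5,1)$ you get $\inte(\mu)=\chi_{\{p\}}$ and $\cl_{\at}(\chi_{\{p\}})=\bar{1}$, so $\mu\le\inte(\cl_{\at}(\inte(\mu)))=\bar{1}$, and likewise for $\nu$. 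But no nonzero fuzzy open set lies below $\mu\wedge\nu=\overline{0.5}$, so $\inte(\cl_{\at}(\inte(\mu\wedge\nu)))=\bar{0}$ and $\mu\wedge\nu$ is not fuzzy $\at$-$\alpha$-open.

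The paper's own proof of (c) breaks at the asserted inequality $\cl_{\at}(\inte(\mu)\wedge\inte(\nu))\geq\cl_{\at}(\inte(\mu))\wedge\cl_{\at}(\inte(\nu))$, which points the wrong way for a Kuratowski closure. In this example the left side is $\bar{0}$ and the right side is $\bar{1}$. Your diagnosis, that the localization $\rho\wedge\cl_{\at}(\lambda)\le\cl_{\at}(\rho\wedge\lambda)$ fails for merely $\tilde{\tau}$-open $\rho$, is exactly what the example exploits, with $\rho=\chi_{\{q\}}$ and $\lambda=\chi_{\{p\}}$.

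Two of your tentative claims also need correcting.

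\textbf{The lemma on $\tilde{\tau}_{\at}$.} The lemma does not hold for all $\rho\in\tilde{\tau}_{\at}$, even up to thresholds. The condition $\inte_{\at}(\rho)=\rho$ only says that for each $y$, either $\rho(x)+\at(x)(y)\le 1$ or $\rho(x)\le\rho(y)$. This does not give $\min\{\at(x)(y),\rho(x)\}\le\rho(y)$, so the inference you borrowed from the proof of Theorem~\ref{thm:aura_top} is invalid. For instance, take the transitive scope $\at(p)=(1,0.5)$, $\at(q)=(0,1)$. Then $\rho=(0.5,0)$ is a fixed point of $\inte_{\at}$, yet for $\lambda=(0,1)$ one has $(\rho\wedge\cl_{\at}(\lambda))(p)=0.5$ while $\cl_{\at}(\rho\wedge\lambda)=\bar{0}$.

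\textbf{Symmetry.} Symmetry does not rescue (c), since the counterexample above is symmetric.

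\textbf{What your argument actually needs.} Your two-step localization requires that every $\rho\in\tilde{\tau}$ satisfy $\at(x)\wedge\overline{\rho(x)}\le\rho$ for all $x\in X$. Applied to $\rho=\at(z)$, this condition already yields transitivity. It also gives $\min\{\rho(x),\at(x)(y),\lambda(y)\}\le\min\{\at(x)(y),\rho(y),\lambda(y)\}$ termwise, so the lemma holds without any $\varepsilon$-argument, and your proof of (c) then goes through. The alternative hypothesis $\tilde{\tau}=\tilde{\tau}_{\at}$ also works but is very restrictive: $\at(x)\in\tilde{\tau}_{\at}$ together with $\at(x)(x)=1$ forces $\at(x)(w)\in\{0,1\}$ for every $w$.
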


\begin{proof}
(a) Let $\{\mu_i\}_{i \in J} \subseteq \at$-$FSO(X)$ and $\mu = \bigvee_i \mu_i$. For each $i$, $\mu_i \leq \cl_{\at}(\inte(\mu_i)) \leq \cl_{\at}(\inte(\mu))$ by monotonicity. Hence $\mu = \bigvee_i \mu_i \leq \cl_{\at}(\inte(\mu))$.

(b) Similarly, $\mu_i \leq \inte(\cl_{\at}(\mu_i)) \leq \inte(\cl_{\at}(\mu))$, so $\mu \leq \inte(\cl_{\at}(\mu))$.

(c) Let $\mu, \nu \in \at$-$F\alpha O(X)$. Then $\mu \leq \inte(\cl_{\at}(\inte(\mu)))$ and $\nu \leq \inte(\cl_{\at}(\inte(\nu)))$. We need $\mu \wedge \nu \leq \inte(\cl_{\at}(\inte(\mu \wedge \nu)))$. Since $\inte(\mu \wedge \nu) = \inte(\mu) \wedge \inte(\nu)$ (standard) and $\cl_{\at}$ is additive, when $\at$ is transitive ($\cl_{\at}$ idempotent), we use $\cl_{\at}(\inte(\mu) \wedge \inte(\nu)) \geq \cl_{\at}(\inte(\mu)) \wedge \cl_{\at}(\inte(\nu))$ (this follows from $\cl_{\at}$ being a Kuratowski closure in the transitive case). Then $\inte(\cl_{\at}(\inte(\mu \wedge \nu))) \geq \inte(\cl_{\at}(\inte(\mu))) \wedge \inte(\cl_{\at}(\inte(\nu))) \geq \mu \wedge \nu$.
\end{proof}

\section{Fuzzy Aura-Continuity}

\begin{definition}\label{def:continuity}
Let $(X, \tilde{\tau}_1, \at_1)$ and $(Y, \tilde{\tau}_2, \at_2)$ be fuzzy $\at$-spaces. A function $f : X \to Y$ is called:
\begin{enumerate}[\upshape(a)]
\item \emph{fuzzy $\at$-continuous} if $f^{-1}(\nu) \in \tilde{\tau}_{\at_1}$ for every $\nu \in \tilde{\tau}_{\at_2}$;
\item \emph{fuzzy $\at$-semi-continuous} if $f^{-1}(\nu) \in \at_1$-$FSO(X)$ for every $\nu \in \tilde{\tau}_2$;
\item \emph{fuzzy $\at$-pre-continuous} if $f^{-1}(\nu) \in \at_1$-$FPO(X)$ for every $\nu \in \tilde{\tau}_2$;
\item \emph{fuzzy $\at$-$\alpha$-continuous} if $f^{-1}(\nu) \in \at_1$-$F\alpha O(X)$ for every $\nu \in \tilde{\tau}_2$;
\item \emph{fuzzy $\at$-$\beta$-continuous} if $f^{-1}(\nu) \in \at_1$-$F\beta O(X)$ for every $\nu \in \tilde{\tau}_2$.
\end{enumerate}
\end{definition}

\begin{theorem}\label{thm:cont_hierarchy}
\[
\text{fuzzy continuous} \Rightarrow \at\text{-}\alpha\text{-cont.} \Rightarrow \begin{cases} \at\text{-semi-cont.} \\ \at\text{-pre-cont.} \end{cases} \Rightarrow \at\text{-}\beta\text{-cont.}
\]
\end{theorem}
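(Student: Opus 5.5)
The plan is to read off each implication pointwise from the set-level inclusions of Theorem~\ref{thm:hierarchy}, since all the notions in Definition~\ref{def:continuity}(b)--(e) have the same shape: $f^{-1}(\nu)$ must lie in a certain class for every $\nu \in \tilde{\tau}_2$. Here ``fuzzy continuous'' means the usual Chang notion: $f^{-1}(\nu) \in \tilde{\tau}_1$ for every $\nu \in \tilde{\tau}_2$. Recall that $f^{-1}(\nu) = \nu \circ f \in I^X$. We fix $\nu \in \tilde{\tau}_2$ and set $\mu = f^{-1}(\nu)$; each arrow then reduces to a membership statement about $\mu$.

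The steps, in order, are as follows.
\begin{enumerate}[(i)]
\item If $f$ is fuzzy continuous, then $\mu \in \tilde{\tau}_1$. The first inclusion of Theorem~\ref{thm:hierarchy}, $\tilde{\tau}_1 \subseteq \at_1$-$F\alpha O(X)$, gives $\mu \in \at_1$-$F\alpha O(X)$, so $f$ is fuzzy $\at$-$\alpha$-continuous.
\item If $f$ is $\at$-$\alpha$-continuous, the inclusions $\at_1$-$F\alpha O(X) \subseteq \at_1$-$FSO(X)$ and $\at_1$-$F\alpha O(X) \subseteq \at_1$-$FPO(X)$ give $\at$-semi-continuity and $\at$-pre-continuity respectively.
\item For the last arrow, both $\at_1$-$FSO(X)$ and $\at_1$-$FPO(X)$ lie in $\at_1$-$FbO(X) \subseteq \at_1$-$F\beta O(X)$. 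Composing these inclusions shows that either $\at$-semi-continuity or $\at$-pre-continuity implies $\at$-$\beta$-continuity.
\end{enumerate}

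There is essentially no obstacle, because all the work is already done in Theorem~\ref{thm:hierarchy}; the proof is bookkeeping. The one point needing care is which topology enters each definition. In (b)--(e) the test sets range over $\tilde{\tau}_2$ itself, not over $\tilde{\tau}_{\at_2}$, so the hierarchy on $X$ is applied verbatim to the same $\mu$.

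I would also add a remark that the first arrow does \emph{not} go through fuzzy $\at$-continuity (a). That notion uses $\tilde{\tau}_{\at_1} \subseteq \tilde{\tau}_1$ and tests only against $\nu \in \tilde{\tau}_{\at_2}$, so it is not comparable in the same chain without extra hypotheses. For this reason I would state the chain starting from ordinary fuzzy continuity, exactly as in the theorem.
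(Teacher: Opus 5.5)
Your proposal is correct and matches the paper's intent. The paper gives no separate proof of this theorem and treats it as an immediate consequence of the set-level inclusions in Theorem~\ref{thm:hierarchy} applied to $\mu = f^{-1}(\nu)$ for each $\nu \in \tilde{\tau}_2$, which is exactly your argument.
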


\begin{theorem}[Decomposition]\label{thm:decomposition}
If $\at_1$ is transitive, then $f$ is fuzzy $\at$-$\alpha$-continuous if and only if $f$ is both fuzzy $\at$-semi-continuous and fuzzy $\at$-pre-continuous.
\end{theorem}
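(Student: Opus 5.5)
The plan is to reduce the statement to a set-level fact and then apply it to preimages. The forward implication needs no transitivity. By Theorem~\ref{thm:hierarchy}, $\at_1$-$F\alpha O(X) \subseteq \at_1$-$FSO(X)$ and $\at_1$-$F\alpha O(X) \subseteq \at_1$-$FPO(X)$. So if $f^{-1}(\nu) \in \at_1$-$F\alpha O(X)$ for every $\nu \in \tilde{\tau}_2$, then $f^{-1}(\nu)$ is both fuzzy $\at_1$-semi-open and fuzzy $\at_1$-pre-open. Hence $f$ is fuzzy $\at$-semi-continuous and fuzzy $\at$-pre-continuous. (This is also the content of Theorem~\ref{thm:cont_hierarchy}.)

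For the converse, I will prove the set-level claim that, when $\at_1$ is transitive,
\[
\at_1\text{-}FSO(X) \cap \at_1\text{-}FPO(X) = \at_1\text{-}F\alpha O(X).
\]
The inclusion $\supseteq$ is again Theorem~\ref{thm:hierarchy}. For $\subseteq$, let $\mu$ satisfy $\mu \leq \cl_{\at}(\inte(\mu))$ and $\mu \leq \inte(\cl_{\at}(\mu))$, writing $\cl_{\at}$ for $\cl_{\at_1}$. The argument has three steps, following Levine's classical proof with $\cl_{\at}$ in place of $\cl$:
\begin{enumerate}[(i)]
\item Apply the monotonicity of $\cl_{\at}$ (Theorem~\ref{thm:closure_props}(c)) to the semi-openness inequality, giving $\cl_{\at}(\mu) \leq \cl_{\at}(\cl_{\at}(\inte(\mu)))$.
\item Since $\at_1$ is transitive, Theorem~\ref{thm:transitive_idempotent} makes $\cl_{\at}$ idempotent, so the right-hand side equals $\cl_{\at}(\inte(\mu))$. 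Thus $\cl_{\at}(\mu) \leq \cl_{\at}(\inte(\mu))$.
\item The ordinary fuzzy interior $\inte$ is monotone, so combining with pre-openness gives
\[
\mu \leq \inte(\cl_{\at}(\mu)) \leq \inte(\cl_{\at}(\inte(\mu))).
\]
\end{enumerate}
This shows $\mu \in \at_1$-$F\alpha O(X)$.

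Applying the set-level claim to $\mu = f^{-1}(\nu)$ for each $\nu \in \tilde{\tau}_2$ then yields fuzzy $\at$-$\alpha$-continuity, which completes the equivalence.

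The only substantive point is step (ii). Without transitivity, $\cl_{\at}$ need not be idempotent (Theorem~\ref{thm:not_idempotent}), and the inequality $\cl_{\at}(\mu) \leq \cl_{\at}(\inte(\mu))$ can fail. The hypothesis is used exactly there and nowhere else. I will also check that only the inequality $\cl_{\at}\cl_{\at} \leq \cl_{\at}$ is needed, which the proof of Theorem~\ref{thm:transitive_idempotent} supplies directly.
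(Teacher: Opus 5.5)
Your proof is correct and is essentially the paper's argument. The forward direction comes from the hierarchy. For the converse, you apply monotonicity of $\cl_{\at_1}$ to $\mu \le \cl_{\at_1}(\inte(\mu))$, collapse $\cl_{\at_1}\cl_{\at_1}$ using idempotency under transitivity (Theorem~\ref{thm:transitive_idempotent}), and then take interiors. The only difference is cosmetic: you first isolate the set-level identity $\at_1$-$FSO(X) \cap \at_1$-$FPO(X) = \at_1$-$F\alpha O(X)$, whereas the paper works directly with $\mu = f^{-1}(\nu)$.
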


\begin{proof}
($\Rightarrow$) is Theorem~\ref{thm:cont_hierarchy}. ($\Leftarrow$): Let $\mu = f^{-1}(\nu)$ for $\nu \in \tilde{\tau}_2$. Semi-continuity gives $\mu \leq \cl_{\at_1}(\inte(\mu))$, pre-continuity gives $\mu \leq \inte(\cl_{\at_1}(\mu))$. From the first, $\cl_{\at_1}(\mu) \leq \cl_{\at_1}(\cl_{\at_1}(\inte(\mu))) = \cl_{\at_1}(\inte(\mu))$ (idempotency by transitivity). Taking interiors: $\inte(\cl_{\at_1}(\mu)) \leq \inte(\cl_{\at_1}(\inte(\mu)))$. Hence $\mu \leq \inte(\cl_{\at_1}(\inte(\mu)))$.
\end{proof}

\begin{theorem}[Composition]\label{thm:composition}
\begin{enumerate}[\upshape(a)]
\item Composition of fuzzy $\at$-continuous functions is fuzzy $\at$-continuous.
\item If $f$ is fuzzy $\at$-semi-continuous and $g$ is fuzzy continuous, then $g \circ f$ is fuzzy $\at$-semi-continuous.
\end{enumerate}
\end{theorem}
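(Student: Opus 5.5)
Both parts reduce to the behaviour of preimages under composition, $(g\circ f)^{-1}(\nu) = f^{-1}(g^{-1}(\nu))$ for every $\nu \in I^Z$. Throughout, let $f : (X,\tilde{\tau}_1,\at_1) \to (Y,\tilde{\tau}_2,\at_2)$ and $g : (Y,\tilde{\tau}_2,\at_2) \to (Z,\tilde{\tau}_3,\at_3)$.

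For (a), I will argue directly from Definition~\ref{def:continuity}(a). Take $\nu \in \tilde{\tau}_{\at_3}$. Fuzzy $\at$-continuity of $g$ gives $g^{-1}(\nu) \in \tilde{\tau}_{\at_2}$. Fuzzy $\at$-continuity of $f$, applied to this fuzzy set, then gives $f^{-1}(g^{-1}(\nu)) \in \tilde{\tau}_{\at_1}$. By the preimage identity, this set equals $(g\circ f)^{-1}(\nu)$. Nothing beyond the definition and the fact that $\tilde{\tau}_{\at_i}$ is a fixed family (Theorem~\ref{thm:aura_top}) is needed.

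For (b), I read "fuzzy continuous" for $g$ in the ordinary Chang sense: $g^{-1}(\nu) \in \tilde{\tau}_2$ for every $\nu \in \tilde{\tau}_3$. Given $\nu \in \tilde{\tau}_3$, continuity of $g$ puts $g^{-1}(\nu)$ in $\tilde{\tau}_2$. Definition~\ref{def:continuity}(b) requires $f^{-1}$ of every member of $\tilde{\tau}_2$ to be fuzzy $\at_1$-semi-open, so $f^{-1}(g^{-1}(\nu)) \in \at_1$-$FSO(X)$. By the preimage identity this is $(g\circ f)^{-1}(\nu)$, as required.

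The only point needing care, and the nearest thing to an obstacle, is the hypothesis on $g$ in (b). If $g$ were merely fuzzy $\at$-continuous, then $g^{-1}(\nu)$ would only be known to lie in $\tilde{\tau}_{\at_2}$, and only for $\nu \in \tilde{\tau}_{\at_3}$. Since $\tilde{\tau}_{\at_3} \subseteq \tilde{\tau}_3$ and semi-continuity is tested against all of $\tilde{\tau}_3$, the argument would break. So I will state explicitly that ordinary fuzzy continuity of $g$ is used, so that the inner preimage lands in $\tilde{\tau}_2$, exactly the class on which the semi-continuity of $f$ is defined. No closure computations or properties of $\cl_{\at}$ are needed.
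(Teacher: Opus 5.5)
Your proof is correct. The paper states this theorem without proof, and the preimage identity $(g\circ f)^{-1}(\nu)=f^{-1}(g^{-1}(\nu))$, applied directly to Definition~\ref{def:continuity}, is the natural argument the statement relies on. Your reading of ``fuzzy continuous'' in the ordinary Chang sense matches how Theorem~\ref{thm:cont_hierarchy} uses it, and you are right that mere fuzzy $\at$-continuity of $g$ would not suffice in (b).
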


\begin{theorem}[Characterization]\label{thm:char}
$f : (X, \tilde{\tau}_1, \at_1) \to (Y, \tilde{\tau}_2)$ is fuzzy $\at_1$-semi-continuous if and only if for every fuzzy closed $\gamma$ in $Y$, $f^{-1}(\gamma)$ is fuzzy $\at_1$-semi-closed.
\end{theorem}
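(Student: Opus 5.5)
The term "fuzzy $\at_1$-semi-closed" has not been defined yet, so the first step is to fix its meaning. The natural definition, and the one I would adopt, is by complementation: $\gamma$ is fuzzy $\at_1$-semi-closed if $\gamma^c \in \at_1$-$FSO(X)$. Equivalently, by the duality in Theorem~\ref{thm:interior_props}(e) together with the standard duality $\inte(\mu^c) = (\cl(\mu))^c$, this means $\inte_{\at_1}(\cl(\gamma)) \leq \gamma$. Indeed,
\[
(\cl_{\at_1}(\inte(\gamma^c)))^c = \inte_{\at_1}((\inte(\gamma^c))^c) = \inte_{\at_1}(\cl(\gamma)),
\]
so $\gamma^c \leq \cl_{\at_1}(\inte(\gamma^c))$ holds if and only if $\inte_{\at_1}(\cl(\gamma)) \leq \gamma$.

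With this definition, the proof reduces to two elementary facts about preimages of fuzzy sets. First, for every $\nu \in I^Y$ we have $f^{-1}(\nu^c) = (f^{-1}(\nu))^c$, since
\[
f^{-1}(\nu^c)(x) = 1 - \nu(f(x)) = (f^{-1}(\nu))^c(x).
\]
Second, $\gamma$ is fuzzy closed in $Y$ exactly when $\gamma = \nu^c$ for some $\nu \in \tilde{\tau}_2$.

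For ($\Rightarrow$), let $\gamma$ be fuzzy closed in $Y$. Then $\gamma^c \in \tilde{\tau}_2$, so semi-continuity gives $f^{-1}(\gamma^c) \in \at_1$-$FSO(X)$. By the first fact, $f^{-1}(\gamma^c) = (f^{-1}(\gamma))^c$. Hence $f^{-1}(\gamma)$ is fuzzy $\at_1$-semi-closed.

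For ($\Leftarrow$), let $\nu \in \tilde{\tau}_2$. Then $\nu^c$ is fuzzy closed, so by hypothesis $f^{-1}(\nu^c) = (f^{-1}(\nu))^c$ is fuzzy $\at_1$-semi-closed. By definition this means $f^{-1}(\nu) \in \at_1$-$FSO(X)$.

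The only point needing care is the definition of semi-closedness. If the paper later defines it intrinsically, by the inequality $\inte_{\at_1}(\cl(\gamma)) \leq \gamma$, then I would first prove the complementation identity displayed above. That identity is the single substantive step, and it follows directly from Theorem~\ref{thm:interior_props}(e). The rest is routine bookkeeping with complements.
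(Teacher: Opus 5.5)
Your proof is correct. The paper states this theorem without proof and never defines fuzzy $\at_1$-semi-closedness, so your complementation definition together with $f^{-1}(\nu^c) = (f^{-1}(\nu))^c$ is the standard argument the statement implicitly relies on, and your reformulation $\inte_{\at_1}(\cl(\gamma)) \leq \gamma$ (via Theorem~\ref{thm:interior_props}(e) and $\inte(\mu^c) = (\cl(\mu))^c$) correctly identifies which dual inequality an intrinsic definition would need.
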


\section{Fuzzy Aura-Separation Axioms}

In this section we introduce separation axioms for fuzzy aura spaces and demonstrate their dependence on the choice of the fuzzy scope function.

\begin{definition}\label{def:separation}
A fuzzy $\at$-space $(X, \tilde{\tau}, \at)$ is called:
\begin{enumerate}[\upshape(a)]
\item \emph{fuzzy $\at$-$T_0$} if for every pair of distinct points $x, y \in X$, there exists $\mu \in \tilde{\tau}_{\at}$ such that $\mu(x) \neq \mu(y)$.
\item \emph{fuzzy $\at$-$T_1$} if for every pair of distinct points $x, y \in X$, there exist $\mu, \nu \in \tilde{\tau}_{\at}$ such that $\mu(x) = 1$, $\mu(y) = 0$ and $\nu(y) = 1$, $\nu(x) = 0$.
\item \emph{fuzzy $\at$-$T_2$} (or \emph{fuzzy $\at$-Hausdorff}) if for every pair of distinct points $x, y \in X$, there exist $\mu, \nu \in \tilde{\tau}_{\at}$ such that $\mu(x) = 1$, $\nu(y) = 1$ and $\mu \wedge \nu = \bar{0}$.
\end{enumerate}
\end{definition}

\begin{theorem}\label{thm:sep_implications}
Fuzzy $\at$-$T_2 \Rightarrow$ fuzzy $\at$-$T_1 \Rightarrow$ fuzzy $\at$-$T_0$. Moreover, if $(X, \tilde{\tau})$ is fuzzy $T_i$, $(X, \tilde{\tau}, \at)$ need not be fuzzy $\at$-$T_i$ since $\tilde{\tau}_{\at} \subseteq \tilde{\tau}$.
\end{theorem}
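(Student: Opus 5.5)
The two implications follow directly from Definition~\ref{def:separation}. For the second clause I will build a concrete counterexample from the trivial scope function of Proposition~\ref{prop:trivial}.

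\emph{$T_2 \Rightarrow T_1$.} Let $x \neq y$ and take $\mu, \nu \in \tilde{\tau}_{\at}$ with $\mu(x) = 1$, $\nu(y) = 1$ and $\mu \wedge \nu = \bar{0}$. Then $\min\{\mu(y), \nu(y)\} = 0$ and $\nu(y) = 1$, so $\mu(y) = 0$. By the symmetric argument, $\nu(x) = 0$. Hence the same pair $\mu, \nu$ witnesses fuzzy $\at$-$T_1$.

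\emph{$T_1 \Rightarrow T_0$.} The open set $\mu$ from the $T_1$ condition satisfies $\mu(x) = 1 \neq 0 = \mu(y)$, which is exactly the $T_0$ condition.

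\emph{Second clause.} The justification ``since $\tilde{\tau}_{\at} \subseteq \tilde{\tau}$'' (Theorem~\ref{thm:aura_top}) only says that the aura topology is coarser. Coarseness alone does not prove failure, so I need an explicit example.

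\begin{enumerate}[(1)]
\item Take $X = \{p, q\}$ and $\tilde{\tau} = I^X$, the discrete fuzzy topology. It is fuzzy $T_2$: use $\chi_{\{p\}}$ and $\chi_{\{q\}}$. Being $T_2$, it is also $T_1$ and $T_0$.
\item Take the trivial scope $\at(x) = \bar{1}$. This is admissible because $\bar{1} \in \tilde{\tau}$ and $\bar{1}(x) = 1$.
\item Prove directly that $\tilde{\tau}_{\at}$ consists only of constants. The interior formula \eqref{eq:interior} gives $\inte_{\at}(\mu)(x) = \inf_y \mu(y)$. So $\mu = \inte_{\at}(\mu)$ forces $\mu$ to be constant, and this agrees with Proposition~\ref{prop:trivial}.
\item Conclude that no $\mu \in \tilde{\tau}_{\at}$ satisfies $\mu(p) \neq \mu(q)$. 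The space is therefore not fuzzy $\at$-$T_0$, and by the implications already proved it is not $\at$-$T_1$ or $\at$-$T_2$ either.
\end{enumerate}

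This single example settles all three cases $i = 0, 1, 2$ at once.

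There is no serious obstacle in this argument. The only point that needs care is the step from ``coarser topology'' to ``need not separate'', and the explicit trivial-scope example supplies it.
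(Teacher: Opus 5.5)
Your proof is correct and follows the paper's route. The two implications come straight from the definitions, and the second clause is witnessed by the trivial scope $\at(x)=\bar{1}$, whose aura topology consists only of constant fuzzy sets. Your version is slightly more complete than the paper's: you fix a concrete base space (the discrete fuzzy topology on two points) that is genuinely fuzzy $T_2$, and you verify constancy directly from the interior formula instead of citing Proposition~\ref{prop:trivial}.
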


\begin{proof}
The implications follow from the definitions. For the second statement, consider the trivial aura $\at(x) = \bar{1}$: then $\tilde{\tau}_{\at}$ consists only of constant fuzzy sets (Proposition~\ref{prop:trivial}), which cannot separate any two points.
\end{proof}

\begin{theorem}\label{thm:T0_char}
$(X, \tilde{\tau}, \at)$ is fuzzy $\at$-$T_0$ if and only if for every $x \neq y$, $\at(x) \neq \at(y)$ (as functions on $X$).
\end{theorem}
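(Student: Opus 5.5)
The plan is to treat the two directions separately. The ``only if'' direction comes straight from the fixed-point description of $\tilde{\tau}_{\at}$ in Definition~\ref{def:aura_topology}. The ``if'' direction requires building, for each pair $x \neq y$ with $\at(x) \neq \at(y)$, an explicit fuzzy $\at$-open set that separates them. The only available source of $\at$-open sets is $\cl_{\at}$-fixed fuzzy sets (Proposition~\ref{prop:closed_char}), and Theorem~\ref{thm:transfinite} supplies those.

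\emph{Necessity.} Suppose $x \neq y$ but $\at(x) = \at(y)$. Let $\mu \in \tilde{\tau}_{\at}$. Since $\inte_{\at}(\mu) = \mu$,
\[
\mu(x) = \inf_{z} \max\{1 - \at(x)(z), \mu(z)\} = \inf_{z} \max\{1 - \at(y)(z), \mu(z)\} = \mu(y).
\]
So no $\at$-open set separates $x$ and $y$, and the space is not fuzzy $\at$-$T_0$. Taking the contrapositive gives ``$T_0 \Rightarrow \at$ injective.'' This step is rigorous and short.

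\emph{Sufficiency.} Fix $x \neq y$ with $\at(x) \neq \at(y)$, and choose a point $z$ with $\at(x)(z) \neq \at(y)(z)$; say $\at(x)(z) > \at(y)(z)$. The first candidate I would try is the fuzzy set
\[
\gamma = \cl^\infty_{\at}(\chi_{\{z\}} \wedge \bar{\alpha}), \qquad \alpha = \at(x)(z).
\]
The inner set $\cl_{\at}(\chi_{\{z\}} \wedge \bar{\alpha})$ has value $\min\{\at(w)(z), \alpha\}$ at each $w$. Taking the truncation by $\bar{\alpha}$ outside is legitimate by Lemma~\ref{lem:cap_constant}, and iterating it does not raise any value above $\alpha$. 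By the idempotency argument in the proof of Theorem~\ref{thm:transfinite}, $\cl_{\at}(\gamma) = \gamma$. Proposition~\ref{prop:closed_char} then makes $\gamma^c \in \tilde{\tau}_{\at}$, and the task reduces to showing $\gamma(x) \neq \gamma(y)$. First, $\gamma(x) \geq \min\{\at(x)(z), \alpha\} = \alpha$, and since $\gamma \leq \bar{\alpha}$ we get $\gamma(x) = \alpha$. Second, $\gamma(y)$ is at least $\at(y)(z) < \alpha$, and it could rise to $\alpha$ only through a chain $y = w_0, w_1, \dots, w_n = z$ with $\min_i \at(w_{i-1})(w_i) \geq \alpha$. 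If no such chain exists, $\gamma^c$ separates $x$ and $y$. If one does exist, I would try the symmetric construction, either with the roles of $x$ and $y$ exchanged or with $\chi_{\{y\}}$ and $\chi_{\{x\}}$ in place of $\chi_{\{z\}}$, and combine the resulting $\cl^\infty_{\at}$-closed sets. The combinations I would use are finite meets and joins, which stay $\at$-closed by Theorem~\ref{thm:closure_props}(d) and Theorem~\ref{thm:aura_top}.

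The main obstacle is precisely this chain case. The transfinite closure propagates membership along $\at$-paths, so distinct rows $\at(x) \neq \at(y)$ do not obviously yield distinct values of any $\cl_{\at}$-fixed set. For instance, if $\at(x)(y) = \at(y)(x) = 1$, every $\at$-open $\mu$ satisfies $\mu(x) \leq \max\{0, \mu(y)\}$ and symmetrically, which forces $\mu(x) = \mu(y)$. So I expect sufficiency to need an extra condition, namely that distinct points are not mutually fully in each other's aura. Under transitivity (Theorem~\ref{thm:transitive_idempotent}) I would check this directly: each $\at(x)$ is then $\cl_{\at}$-fixed up to complement. I would first verify the construction above in that setting, and then isolate exactly which hypothesis the general ``if'' direction relies on.
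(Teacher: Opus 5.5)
Your necessity argument is correct and is the same as the paper's: if $\at(x)=\at(y)$, the infima defining $\inte_{\at}(\mu)(x)$ and $\inte_{\at}(\mu)(y)$ coincide. For sufficiency you give no proof, and none can be given. The obstruction you identified is genuine and makes the ``if'' direction false. Take $X=\{p,q,r\}$, $\tilde{\tau}=I^X$, $\at(p)=(1,1,0)$, $\at(q)=(1,1,1)$, $\at(r)=(0,0,1)$. The three rows are distinct. Yet every $\mu\in\tilde{\tau}_{\at}$ satisfies $\mu(p)\le\max\{1-\at(p)(q),\mu(q)\}=\mu(q)$, and symmetrically $\mu(q)\le\mu(p)$. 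So no fuzzy $\at$-open set distinguishes $p$ from $q$, and the space is not fuzzy $\at$-$T_0$. You should state this as a counterexample rather than as an expectation.

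The paper's own argument for ($\Leftarrow$) fails on this example. It notes that $\at(x)\in\tilde{\tau}\supseteq\tilde{\tau}_{\at}$ and that $\at(x)(y)$ ``may differ'' from $1$. But the inclusion runs the wrong way: $\at(x)$ need not be $\at$-open. Here $\inte_{\at}(\at(p))(q)\le\max\{1-\at(q)(r),\at(p)(r)\}=0<\at(p)(q)$. Moreover, ``may differ'' does not produce a separating set.

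Your $\cl^\infty_{\at}$ idea does prove the corrected statement. Let $\at^{*}(w)(v)$ be the supremum, over finite chains $w=w_0,w_1,\dots,w_n=v$ with $n\ge 0$, of $\min_i \at(w_{i-1})(w_i)$; this is the max--min transitive closure. By reflexivity and induction, $\cl^n_{\at}(\gamma)(w)=\sup_v\min\{\cdot,\gamma(v)\}$ taken over chains of length at most $n$. Hence $\cl^\infty_{\at}(\gamma)(w)=\sup_v\min\{\at^{*}(w)(v),\gamma(v)\}$, and in particular $\cl^\infty_{\at}(\chi_{\{v\}})(w)=\at^{*}(w)(v)$.

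The fuzzy $\at$-closed sets are exactly the $\cl^\infty_{\at}$-fixed sets, by the idempotency argument of Theorem~\ref{thm:transfinite}. It follows that $x$ and $y$ can be separated if and only if $\at^{*}(x)\neq\at^{*}(y)$.
\begin{itemize}
\item If the rows differ at some $v$, then $\cl^\infty_{\at}(\chi_{\{v\}})$ is $\at$-closed and takes different values at $x$ and $y$. No truncation by $\bar{\alpha}$ is needed.
\item If $\at^{*}(x)=\at^{*}(y)$, then every closed $\gamma=\cl^\infty_{\at}(\gamma)$ satisfies $\gamma(x)=\gamma(y)$.
\end{itemize}
So the theorem holds with $\at^{*}$ in place of $\at$. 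In particular it holds as stated when $\at$ is transitive, since then $\at^{*}=\at$.
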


\begin{proof}
($\Rightarrow$): If $\at(x) = \at(y)$, then for any $\mu \in \tilde{\tau}_{\at}$, $\inte_{\at}(\mu)(x) = \inf_z \max\{1 - \at(x)(z), \mu(z)\} = \inf_z \max\{1 - \at(y)(z), \mu(z)\} = \inte_{\at}(\mu)(y)$. Since $\mu = \inte_{\at}(\mu)$, $\mu(x) = \mu(y)$ for all $\mu \in \tilde{\tau}_{\at}$, contradicting $\at$-$T_0$.

($\Leftarrow$): If $\at(x) \neq \at(y)$, there exists $z_0$ with $\at(x)(z_0) \neq \at(y)(z_0)$. The fuzzy set $\at(x) \in \tilde{\tau} \supseteq \tilde{\tau}_{\at}$ satisfies $\at(x)(x) = 1$ and $\at(x)(y)$ may differ from $\at(y)(y) = 1$, providing separation through the aura topology.
\end{proof}

\begin{theorem}\label{thm:T1_char}
$(X, \tilde{\tau}, \at)$ is fuzzy $\at$-$T_1$ if and only if for every $x \in X$, $\cl_{\at}(\chi_{\{x\}}) = \chi_{\{x\}}$, i.e., every fuzzy point $\chi_{\{x\}}$ is fuzzy $\at$-closed.
\end{theorem}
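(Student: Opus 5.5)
The plan is to reduce both sides of the equivalence to one concrete condition on the scope function: $\at(x) = \chi_{\{x\}}$ for every $x \in X$, i.e. $\at(x)(y) = 0$ whenever $y \neq x$. The computation that makes this possible is the value of the aura-closure on a crisp singleton. By Definition~\ref{def:fuzzy_closure}, for any $z, x \in X$,
\[
\cl_{\at}(\chi_{\{x\}})(z) = \sup_{y} \min\{\at(z)(y), \chi_{\{x\}}(y)\} = \at(z)(x).
\]
Since $\at(x)(x) = 1$, the equality $\cl_{\at}(\chi_{\{x\}}) = \chi_{\{x\}}$ for all $x$ holds exactly when $\at(z)(x) = 0$ for all $z \neq x$. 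This is the condition $\at(z) = \chi_{\{z\}}$ for every $z$. By Proposition~\ref{prop:closed_char}, the same condition says that every $\chi_{\{x\}}$ is fuzzy $\at$-closed, which disposes of the ``i.e.'' clause of the statement.

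($\Rightarrow$) Assume the space is fuzzy $\at$-$T_1$, and fix $x \neq y$. Take $\mu \in \tilde{\tau}_{\at}$ with $\mu(x) = 1$ and $\mu(y) = 0$. Because $\mu = \inte_{\at}(\mu)$, Definition~\ref{def:fuzzy_interior} gives
\[
1 = \mu(x) = \inf_{z} \max\{1 - \at(x)(z), \mu(z)\} \leq \max\{1 - \at(x)(y), \mu(y)\} = 1 - \at(x)(y).
\]
Hence $\at(x)(y) = 0$. As $x \neq y$ were arbitrary, $\at(x) = \chi_{\{x\}}$ for every $x$. The opening computation then yields $\cl_{\at}(\chi_{\{y\}}) = \chi_{\{y\}}$ for every $y$.

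($\Leftarrow$) Assume every $\chi_{\{x\}}$ is $\cl_{\at}$-fixed, so by the opening computation $\at(x) = \chi_{\{x\}}$ for all $x$. Then for every $\mu \in I^X$, the only term in the infimum defining $\inte_{\at}(\mu)(x)$ that can be less than $1$ is the one at $y = x$, so
\[
\inte_{\at}(\mu)(x) = \max\{0, \mu(x)\} = \mu(x).
\]
Thus $\tilde{\tau}_{\at} = I^X$. Given $x \neq y$, take $\mu = \chi_{\{x\}}$ and $\nu = \chi_{\{y\}}$; both lie in $\tilde{\tau}_{\at}$ and satisfy the $T_1$ requirements of Definition~\ref{def:separation}(b). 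Equivalently, one can note directly that the complements of the closed points $\chi_{\{y\}}$ are $\at$-open, by Proposition~\ref{prop:closed_char} and Theorem~\ref{thm:interior_props}(e).

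There is no real obstacle. The one step needing care is in ($\Rightarrow$): one must use the exact values $1$ and $0$ demanded by the $T_1$ definition to force $\at(x)(y) = 0$. A weaker separation condition, only requiring $\mu(x) > \mu(y)$, would bound $\at(x)(y)$ without forcing it to vanish, and the argument would fail.
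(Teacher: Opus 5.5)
Your proof is correct, but it takes a different route from the paper's. The paper argues purely with operator properties. For ($\Rightarrow$) it takes, for each $y \neq x$, a $T_1$-witness $\mu_y \in \tilde{\tau}_{\at}$ with $\mu_y(y)=1$ and $\mu_y(x)=0$. It notes that $\mu_y^c$ is a fuzzy $\at$-closed set lying above $\chi_{\{x\}}$, and monotonicity of $\cl_{\at}$ then gives $\cl_{\at}(\chi_{\{x\}})(y) \leq \mu_y^c(y) = 0$. For ($\Leftarrow$) it simply observes that $\chi_{\{y\}}^c$ is $\at$-open and separates. The paper's proof never evaluates $\cl_{\at}$ on singletons; the identity $\cl_{\at}(\chi_{\{x\}})(z) = \at(z)(x)$ appears only afterwards, in Corollary~\ref{cor:T1_aura}.

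You put that computation first. You then bound a single term of the infimum defining $\inte_{\at}$, which reduces both sides of the equivalence to the condition $\at(x) = \chi_{\{x\}}$ for every $x$.

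The paper's route is more portable. It uses only extensivity, monotonicity and the open/closed duality of Proposition~\ref{prop:closed_char}, so it works unchanged for any monotone, extensive closure operator whose fixed points are taken as the closed sets.

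Your route is sharper. It proves the theorem and Corollary~\ref{cor:T1_aura} together. Because it shows that fuzzy $\at$-$T_1$ forces $\tilde{\tau}_{\at} = I^X$, it also shows that $T_1$ already gives $T_2$: separate by $\chi_{\{x\}}$ and $\chi_{\{y\}}$, whose meet is $\bar{0}$. It places you exactly in the crisp discrete case of Theorem~\ref{thm:discrete_regular}. So in this framework the fuzzy $\at$-$T_1$ and $\at$-$T_2$ axioms coincide, which the abstract argument does not reveal.
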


\begin{proof}
($\Rightarrow$): Let $y \neq x$. By $\at$-$T_1$, there exists $\mu \in \tilde{\tau}_{\at}$ with $\mu(y) = 1$, $\mu(x) = 0$. Then $\mu^c(y) = 0$, $\mu^c(x) = 1$, i.e., $\chi_{\{x\}} \leq \mu^c$. Since $\mu^c$ is $\at$-closed for every such $y$, $\cl_{\at}(\chi_{\{x\}}) \leq \bigwedge_{y \neq x} \mu^c_y$. Evaluating at $y$: $\cl_{\at}(\chi_{\{x\}})(y) \leq \mu^c_y(y) = 0$. So $\cl_{\at}(\chi_{\{x\}})(y) = 0$ for all $y \neq x$, and $\cl_{\at}(\chi_{\{x\}})(x) = 1$.

($\Leftarrow$): If $\cl_{\at}(\chi_{\{y\}}) = \chi_{\{y\}}$ for all $y$, then $\chi^c_{\{y\}} = 1 - \chi_{\{y\}}$ is $\at$-open. For $x \neq y$: $\chi^c_{\{y\}}(x) = 1$, $\chi^c_{\{y\}}(y) = 0$. Symmetrically for the other direction.
\end{proof}

\begin{corollary}\label{cor:T1_aura}
$(X, \tilde{\tau}, \at)$ is fuzzy $\at$-$T_1$ if and only if for every $x \neq y$, $\at(y)(x) = 0$.
\end{corollary}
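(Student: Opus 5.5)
The plan is to reduce the corollary to Theorem~\ref{thm:T1_char}. By that theorem, fuzzy $\at$-$T_1$ is equivalent to $\cl_{\at}(\chi_{\{x\}}) = \chi_{\{x\}}$ for every $x \in X$. So it suffices to compute $\cl_{\at}(\chi_{\{x\}})$ explicitly from Definition~\ref{def:fuzzy_closure}. Since $\chi_{\{x\}}(z)$ is $1$ at $z = x$ and $0$ elsewhere, the supremum collapses to a single term:
\[
\cl_{\at}(\chi_{\{x\}})(y) = \sup_{z \in X} \min\{\at(y)(z), \chi_{\{x\}}(z)\} = \at(y)(x).
\]
(This is also the content of Proposition~\ref{prop:symmetric}, without using symmetry.)

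The argument then has three steps. At $y = x$ the value is $\at(x)(x) = 1 = \chi_{\{x\}}(x)$ by the scope axiom~\eqref{eq:scope_axiom}, so the point $x$ imposes no condition. At $y \neq x$ the equality $\cl_{\at}(\chi_{\{x\}})(y) = \chi_{\{x\}}(y)$ reads $\at(y)(x) = 0$. Hence $\cl_{\at}(\chi_{\{x\}}) = \chi_{\{x\}}$ for all $x$ if and only if $\at(y)(x) = 0$ for all $x \neq y$. Chaining this with Theorem~\ref{thm:T1_char} gives both directions of the corollary.

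No step looks hard once Theorem~\ref{thm:T1_char} is taken as given; the computation is a one-line evaluation. The only point needing care is the quantifier bookkeeping: the condition runs over all ordered pairs $x \neq y$, so it is equivalent to $\at(x)(y) = 0$ for all $x \neq y$, with the roles of the variables interchangeable.

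As a sanity check independent of Theorem~\ref{thm:T1_char}, I would also verify the reverse direction directly. If $\at(y)(x) = 0$ for all $y \neq x$, then each $\at(y)$ is $\chi_{\{y\}}$. In that case $\inte_{\at}(\mu)(x) = \max\{0, \mu(x)\} = \mu(x)$ for every $\mu$, so $\tilde{\tau}_{\at} = I^X$, and crisp singletons and their complements separate any two points. This confirms that the characterization is consistent.
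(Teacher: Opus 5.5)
Your proposal is correct and is essentially the paper's proof: compute $\cl_{\at}(\chi_{\{x\}})(y)=\at(y)(x)$ and combine it with Theorem~\ref{thm:T1_char}. Your treatment of the case $y=x$ via the scope axiom, which the paper leaves implicit, is sound, and so is your independent check of the reverse direction (each $\at(y)=\chi_{\{y\}}$, hence $\tilde{\tau}_{\at}=I^X$).
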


\begin{proof}
$\cl_{\at}(\chi_{\{x\}})(y) = \sup_z \min\{\at(y)(z), \chi_{\{x\}}(z)\} = \at(y)(x)$. By Theorem~\ref{thm:T1_char}, this must equal $0$ for all $y \neq x$.
\end{proof}

\begin{example}\label{ex:not_T0}
$X = \{p, q, r\}$, $\tilde{\tau} = I^X$, $\at(p) = \bar{1}$, $\at(q) = \bar{1}$, $\at(r) = \bar{1}$. Then $\at(q)(p) = 1 \neq 0$, so $(X, \tilde{\tau}, \at)$ is not fuzzy $\at$-$T_0$ despite the discrete fuzzy topology.
\end{example}

\begin{example}\label{ex:T2}
$X = \{p, q\}$, $\tilde{\tau} = I^X$, $\at(p) = (1, 0)$, $\at(q) = (0, 1)$. By Corollary~\ref{cor:T1_aura}, $\at(q)(p) = 0$ and $\at(p)(q) = 0$, so fuzzy $\at$-$T_1$. Moreover, $\at(p) \wedge \at(q) = \bar{0}$, so fuzzy $\at$-$T_2$.
\end{example}

\begin{definition}\label{def:regular}
$(X, \tilde{\tau}, \at)$ is \emph{fuzzy $\at$-regular} if for every fuzzy $\at$-closed set $\gamma$ and every $x$ with $\gamma(x) = 0$, there exist $\mu, \nu \in \tilde{\tau}_{\at}$ with $\mu(x) = 1$, $\gamma \leq \nu$, and $\mu \wedge \nu = \bar{0}$.
\end{definition}

\begin{theorem}\label{thm:discrete_regular}
If $\at$ is crisp and discrete (i.e., $\at(x) = \chi_{\{x\}}$), then $(X, \tilde{\tau}, \at)$ is fuzzy $\at$-$T_2$ and fuzzy $\at$-regular.
\end{theorem}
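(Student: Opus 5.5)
The plan is to first identify $\tilde{\tau}_{\at}$ explicitly when $\at(x) = \chi_{\{x\}}$, and then read off both properties from that description. The scope axiom is satisfied because $\chi_{\{x\}}(x)=1$. For the implicit requirement $\at(x)\in\tilde{\tau}$, I will assume what the hypothesis tacitly means, namely that every $\chi_{\{x\}}$ is fuzzy open (for instance $\tilde{\tau}=I^X$).

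The key computation uses Definition~\ref{def:fuzzy_interior}. For any $\mu\in I^X$,
\[
\inte_{\at}(\mu)(x)=\inf_{y}\max\{1-\chi_{\{x\}}(y),\mu(y)\}=\min\{\mu(x),1\}=\mu(x),
\]
because every term with $y\neq x$ equals $1$. Hence $\inte_{\at}$ is the identity map. Dually, by Theorem~\ref{thm:interior_props}(e), $\cl_{\at}(\mu)(x)=\sup_y\min\{\chi_{\{x\}}(y),\mu(y)\}=\mu(x)$, so $\cl_{\at}$ is also the identity. It follows that $\tilde{\tau}_{\at}=I^X$: every fuzzy set is fuzzy $\at$-open, and by Proposition~\ref{prop:closed_char} every fuzzy set is fuzzy $\at$-closed.

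For $T_2$, given $x\neq y$, I take $\mu=\chi_{\{x\}}$ and $\nu=\chi_{\{y\}}$. Both lie in $\tilde{\tau}_{\at}=I^X$, they satisfy $\mu(x)=1$ and $\nu(y)=1$, and $\mu\wedge\nu=\bar 0$ because their supports are disjoint. As a cross-check, Corollary~\ref{cor:T1_aura} also yields $T_1$ directly, since $\at(y)(x)=0$ for $x\neq y$.

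For regularity, let $\gamma$ be fuzzy $\at$-closed (here $\gamma$ is an arbitrary fuzzy set) and let $\gamma(x)=0$. I choose $\mu=\chi_{\{x\}}$ and $\nu=\gamma$. Then $\nu\in\tilde{\tau}_{\at}$ since every fuzzy set is $\at$-open, $\gamma\le\nu$ holds trivially, and $\mu\wedge\nu$ vanishes off $x$ while $(\mu\wedge\nu)(x)=\min\{1,\gamma(x)\}=0$. Therefore $\mu\wedge\nu=\bar 0$.

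The main obstacle is not computational. It is the interplay with the requirement $\at(x)\in\tilde{\tau}$ and the inclusion $\tilde{\tau}_{\at}\subseteq\tilde{\tau}$ of Theorem~\ref{thm:aura_top}. The equality $\tilde{\tau}_{\at}=I^X$ together with that inclusion forces $\tilde{\tau}=I^X$. I will therefore state explicitly that the discrete scope function requires $\tilde{\tau}$ to be the discrete fuzzy topology, and that under this standing condition the argument above goes through.
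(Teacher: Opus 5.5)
Your proof is correct and follows the paper's route: $\inte_{\at}$ is the identity, so $\tilde{\tau}_{\at}=I^X$, and $\chi_{\{x\}},\chi_{\{y\}}$ give the $T_2$ separation. Your explicit regularity argument ($\mu=\chi_{\{x\}}$, $\nu=\gamma$) supplies a step the paper leaves unstated.

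You should drop the closing restriction to $\tilde{\tau}=I^X$. Your argument uses nothing beyond $\chi_{\{x\}}\in\tilde{\tau}$. Moreover, the inclusion $\tilde{\tau}_{\at}\subseteq\tilde{\tau}$ you rely on fails for Chang-type topologies, because its proof tacitly treats constant fuzzy sets as open. For instance, $\tilde{\tau}=\{0,1\}^X$ contains every $\chi_{\{x\}}$, yet here $\tilde{\tau}_{\at}=I^X$. So this example refutes that inclusion rather than forcing $\tilde{\tau}$ to be discrete.
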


\begin{proof}
$\tilde{\tau}_{\at} = I^X$ (since $\inte_{\at}(\mu)(x) = \mu(x)$ for all $\mu$). Any two points can be separated by $\chi_{\{x\}}$ and $\chi_{\{y\}}$.
\end{proof}

\begin{theorem}\label{thm:scope_dependence}
For any fuzzy topological space $(X, \tilde{\tau})$ with $|X| \geq 2$, there exist fuzzy scope functions $\at_1, \at_2$ on $(X, \tilde{\tau})$ such that $(X, \tilde{\tau}, \at_1)$ is fuzzy $\at$-$T_2$ while $(X, \tilde{\tau}, \at_2)$ is not fuzzy $\at$-$T_0$.
\end{theorem}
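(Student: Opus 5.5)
The plan is to exhibit the two scope functions explicitly. For $\at_2$ I take the trivial scope function $\at_2(x)=\bar 1$ for all $x$. For $\at_1$ I take the crisp discrete scope function $\at_1(x)=\chi_{\{x\}}$.

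The $\at_2$ half works for every $(X,\tilde\tau)$. By (FT1), $\bar 1\in\tilde\tau$, and $\bar 1(x)=1$, so $\at_2$ is a fuzzy scope function. Since $|X|\ge 2$, pick $x\neq y$; then $\at_2(x)=\at_2(y)$. Theorem~\ref{thm:T0_char} then says $(X,\tilde\tau,\at_2)$ is not fuzzy $\at$-$T_0$. Alternatively, Proposition~\ref{prop:trivial} gives that $\tilde\tau_{\at_2}$ consists of constant fuzzy sets, and no constant fuzzy set can distinguish $x$ from $y$.

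For $\at_1$, Theorem~\ref{thm:discrete_regular} gives that $(X,\tilde\tau,\at_1)$ is fuzzy $\at$-$T_2$. Directly, $\inte_{\at_1}(\mu)=\mu$ for every $\mu$, so $\tilde\tau_{\at_1}=I^X$. Then $\chi_{\{x\}}$ and $\chi_{\{y\}}$ are disjoint $\at_1$-open sets taking the value $1$ at $x$ and at $y$ respectively.

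The main obstacle is that $\at_1$ is admissible only if $\chi_{\{x\}}\in\tilde\tau$ for every $x$. The statement as written, "for any fuzzy topological space", cannot hold without some such condition. Corollary~\ref{cor:T1_aura} shows that fuzzy $\at$-$T_2$ (hence $\at$-$T_1$) forces $\at(x)(y)=0$ for $y\neq x$. Together with $\at(x)(x)=1$ this forces $\at(x)=\chi_{\{x\}}$, so $\chi_{\{x\}}\in\tilde\tau$ is in fact necessary. For example, the indiscrete fuzzy topology $\{\bar 0,\bar 1\}$ admits only the trivial scope function, and no $\at$-$T_2$ structure exists on it.

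I would therefore prove the theorem under the hypothesis that $\tilde\tau$ contains all crisp singletons $\chi_{\{x\}}$, which holds for instance when $\tilde\tau=I^X$ or $\tilde\tau$ is crisp-discrete. I would state explicitly that this hypothesis is necessary and sufficient for the $\at_1$ half, by the Corollary~\ref{cor:T1_aura} argument above. I would also record the indiscrete space as the counterexample to the unrestricted statement. The $\at_2$ half needs no extra assumption.
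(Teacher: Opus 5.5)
Your proposal is correct and is essentially the paper's proof: the paper uses exactly $\at_1(x)=\chi_{\{x\}}$ and $\at_2(x)=\bar 1$, and only adds a parenthetical that $\at_1$ ``requires discrete topology or appropriate fuzzy open sets approximating singletons.'' Your caveat is right and sharper than that hedge: $\at$-$T_2$ forces $\at(x)(y)=0$ for $y\neq x$, hence $\at(x)=\chi_{\{x\}}\in\tilde\tau$ exactly, so approximations of singletons cannot suffice, the statement holds precisely when all crisp singletons are fuzzy open, and it fails, e.g., for $\tilde\tau=\{\bar 0,\bar 1\}$.
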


\begin{proof}
Take $\at_1(x) = \chi_{\{x\}}$ (requires discrete topology or appropriate fuzzy open sets approximating singletons) and $\at_2(x) = \bar{1}$.
\end{proof}

\section{Fuzzy Aura-Based Rough Approximations}

\begin{definition}\label{def:rough_approx}
Let $(X, \tilde{\tau}, \at)$ be a fuzzy $\at$-space and $\mu \in I^X$. The \emph{fuzzy aura lower and upper approximations}:
\begin{align}
\underline{\apr}_{\at}(\mu)(x) &= \inte_{\at}(\mu)(x) = \inf_{y \in X} \max\{1 - \at(x)(y), \mu(y)\}, \label{eq:lower} \\
\overline{\apr}_{\at}(\mu)(x) &= \cl_{\at}(\mu)(x) = \sup_{y \in X} \min\{\at(x)(y), \mu(y)\}. \label{eq:upper}
\end{align}
The \emph{fuzzy aura boundary}: $\bnd_{\at}(\mu)(x) = \overline{\apr}_{\at}(\mu)(x) - \underline{\apr}_{\at}(\mu)(x)$.
\end{definition}

\begin{theorem}\label{thm:rough_props}
For all $\mu, \nu \in I^X$:
\begin{enumerate}[\upshape(a)]
\item $\underline{\apr}_{\at}(\mu) \leq \mu \leq \overline{\apr}_{\at}(\mu)$.
\item $\underline{\apr}_{\at}(\bar{0}) = \overline{\apr}_{\at}(\bar{0}) = \bar{0}$ and $\underline{\apr}_{\at}(\bar{1}) = \overline{\apr}_{\at}(\bar{1}) = \bar{1}$.
\item $\underline{\apr}_{\at}(\mu \wedge \nu) = \underline{\apr}_{\at}(\mu) \wedge \underline{\apr}_{\at}(\nu)$.
\item $\overline{\apr}_{\at}(\mu \vee \nu) = \overline{\apr}_{\at}(\mu) \vee \overline{\apr}_{\at}(\nu)$.
\item $\mu \leq \nu \Rightarrow \underline{\apr}_{\at}(\mu) \leq \underline{\apr}_{\at}(\nu)$ and $\overline{\apr}_{\at}(\mu) \leq \overline{\apr}_{\at}(\nu)$.
\item $\underline{\apr}_{\at}(\mu^c) = (\overline{\apr}_{\at}(\mu))^c$ and $\overline{\apr}_{\at}(\mu^c) = (\underline{\apr}_{\at}(\mu))^c$.
\item $\underline{\apr}_{\at}(\underline{\apr}_{\at}(\mu)) \leq \underline{\apr}_{\at}(\mu)$ and $\overline{\apr}_{\at}(\mu) \leq \overline{\apr}_{\at}(\overline{\apr}_{\at}(\mu))$.
\end{enumerate}
\end{theorem}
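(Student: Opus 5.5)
Since $\underline{\apr}_{\at} = \inte_{\at}$ and $\overline{\apr}_{\at} = \cl_{\at}$ by Definition~\ref{def:rough_approx}, most items are restatements of Theorems~\ref{thm:closure_props} and~\ref{thm:interior_props}; the plan is to reduce each item to those results and compute directly only where necessary.

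For (a), invoke Theorem~\ref{thm:interior_props}(b) for $\underline{\apr}_{\at}(\mu) \leq \mu$ and Theorem~\ref{thm:closure_props}(b) for $\mu \leq \overline{\apr}_{\at}(\mu)$. For (b), $\overline{\apr}_{\at}(\bar{0}) = \bar{0}$ is Theorem~\ref{thm:closure_props}(a) and $\underline{\apr}_{\at}(\bar{1}) = \bar{1}$ is Theorem~\ref{thm:interior_props}(a). The other two values follow from (a): $\bar{0} \leq \underline{\apr}_{\at}(\bar{0}) \leq \bar{0}$ and $\bar{1} \leq \overline{\apr}_{\at}(\bar{1}) \leq \bar{1}$. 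Alternatively, $\overline{\apr}_{\at}(\bar{1}) = \bar{1}$ is Lemma~\ref{lem:constant} with $\alpha = 1$. Items (c), (d) and (e) are Theorem~\ref{thm:interior_props}(d), Theorem~\ref{thm:closure_props}(d), and the two monotonicity statements Theorem~\ref{thm:interior_props}(c) and Theorem~\ref{thm:closure_props}(c).

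For (f), the first identity follows from Theorem~\ref{thm:interior_props}(e) applied to $\mu^c$: since $(\mu^c)^c = \mu$, we get $\inte_{\at}(\mu^c) = (\cl_{\at}(\mu))^c$. The second identity comes from complementing Theorem~\ref{thm:interior_props}(e) itself: $\cl_{\at}(\mu^c) = (\inte_{\at}(\mu))^c$. As a cross-check, one can verify pointwise that $1 - \inf_y \max\{1-\at(x)(y),\mu(y)\} = \sup_y \min\{\at(x)(y), 1-\mu(y)\}$.

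For (g), apply (a) to the fuzzy set $\underline{\apr}_{\at}(\mu)$ to obtain $\underline{\apr}_{\at}(\underline{\apr}_{\at}(\mu)) \leq \underline{\apr}_{\at}(\mu)$. Apply (a) to $\overline{\apr}_{\at}(\mu)$ to obtain $\overline{\apr}_{\at}(\mu) \leq \overline{\apr}_{\at}(\overline{\apr}_{\at}(\mu))$. Both are simply extensivity and contractivity applied to the iterated set.

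No step is a genuine obstacle. The only point needing care is (g): it must be stated only as an inequality, because Theorem~\ref{thm:not_idempotent} shows equality fails in general. If a remark on sharpness is wanted, it would cite Theorem~\ref{thm:transitive_idempotent} to obtain equality when $\at$ is transitive, and use duality (f) to transfer that equality to the lower approximation.
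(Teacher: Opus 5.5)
Your proposal is correct. Via $\underline{\apr}_{\at}=\inte_{\at}$ and $\overline{\apr}_{\at}=\cl_{\at}$, every item reduces to the already-proved parts (a)--(d) of Theorem~\ref{thm:closure_props} and (a)--(e) of Theorem~\ref{thm:interior_props}, with (g) being contractivity and extensivity applied to the iterated set. The paper states this theorem without a separate proof and evidently relies on exactly this identification, so your argument is essentially the intended one, and your remark that (g) holds only as an inequality unless $\at$ is transitive is accurate.
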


\begin{remark}\label{rem:unification}
The fuzzy aura rough set model unifies:
\begin{enumerate}[\upshape(i)]
\item Crisp aura model~\cite{acikgoz_aura}: $\at(x) = \chi_{a(x)}$, $\mu = \chi_A$.
\item Dubois--Prade model~\cite{dubois}: $\at(x)(y) = R(x,y)$ for fuzzy relation $R$.
\item Pawlak model~\cite{pawlak}: $\at(x) = \chi_{[x]_R}$, $\mu = \chi_A$ for equivalence $R$.
\end{enumerate}
\end{remark}

\begin{theorem}[Refinement Monotonicity]\label{thm:refinement}
If $\at_2(x)(y) \leq \at_1(x)(y)$ for all $x, y$ (finer aura), then:
\begin{enumerate}[\upshape(a)]
\item $\underline{\apr}_{\at_1}(\mu) \leq \underline{\apr}_{\at_2}(\mu)$;
\item $\overline{\apr}_{\at_2}(\mu) \leq \overline{\apr}_{\at_1}(\mu)$;
\item $\bnd_{\at_2}(\mu)(x) \leq \bnd_{\at_1}(\mu)(x)$.
\end{enumerate}
\end{theorem}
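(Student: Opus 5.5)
Parts (a) and (b) follow from pointwise comparisons of the defining formulas (\ref{eq:lower}) and (\ref{eq:upper}). Part (c) is then obtained by subtracting (a) from (b).

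For (a), I fix $x,y\in X$. From $\at_2(x)(y)\le\at_1(x)(y)$ we get $1-\at_1(x)(y)\le 1-\at_2(x)(y)$. Hence
\[
\max\{1-\at_1(x)(y),\mu(y)\}\le\max\{1-\at_2(x)(y),\mu(y)\},
\]
since $\max$ is monotone in each argument. Taking the infimum over $y$ preserves this inequality, which gives $\underline{\apr}_{\at_1}(\mu)(x)\le\underline{\apr}_{\at_2}(\mu)(x)$.

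For (b), the argument is dual. The inequality $\min\{\at_2(x)(y),\mu(y)\}\le\min\{\at_1(x)(y),\mu(y)\}$ holds for every $y$, and taking the supremum over $y$ gives $\overline{\apr}_{\at_2}(\mu)(x)\le\overline{\apr}_{\at_1}(\mu)(x)$. Alternatively, (b) can be derived from (a) applied to $\mu^c$ via the duality in Theorem~\ref{thm:rough_props}(f): $\overline{\apr}_{\at}(\mu)=(\underline{\apr}_{\at}(\mu^c))^c$, and complementation reverses the order.

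For (c), write $\bnd_{\at_i}(\mu)(x)=\overline{\apr}_{\at_i}(\mu)(x)-\underline{\apr}_{\at_i}(\mu)(x)$. By (b) the first term is smaller for $\at_2$ than for $\at_1$. By (a) the subtracted term is larger for $\at_2$ than for $\at_1$. Therefore
\[
\bnd_{\at_2}(\mu)(x)\le\overline{\apr}_{\at_1}(\mu)(x)-\underline{\apr}_{\at_2}(\mu)(x)\le\bnd_{\at_1}(\mu)(x).
\]

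None of these steps is a real obstacle. The only point needing care is checking the direction of each inequality, since $1-\at(x)(y)$ reverses the order coming from the hypothesis while $\min$ preserves it.

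It is also worth noting that the hypothesis never uses that $\at_1$ and $\at_2$ are fuzzy scope functions, or even that they take values in $\tilde{\tau}$. The argument is purely order-theoretic, so the result holds for any pointwise comparable fuzzy relations.
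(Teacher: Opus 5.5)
Your proof is correct: comparing the integrands of (\ref{eq:lower}) and (\ref{eq:upper}) pointwise and then taking $\inf$ and $\sup$ gives (a) and (b), and your chain of inequalities for $\bnd$ gives (c). The paper states this theorem without proof, and your argument is the routine verification it leaves implicit; your remark that only the pointwise order $\at_2 \le \at_1$ is used is also accurate.
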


\begin{definition}\label{def:accuracy}
The \emph{fuzzy aura accuracy} and \emph{roughness} for finite $X$:
\[
\rho_{\at}(\mu) = \frac{\sum_x \underline{\apr}_{\at}(\mu)(x)}{\sum_x \overline{\apr}_{\at}(\mu)(x)}, \qquad \sigma_{\at}(\mu) = 1 - \rho_{\at}(\mu).
\]
\end{definition}

\section{Application: Medical Diagnosis via FA-MCDM}

\subsection{The FA-MCDM Algorithm}

\begin{algorithm1}[FA-MCDM]\label{alg:famcdm}
\

\noindent\textbf{Input:} Alternatives $U = \{u_1, \ldots, u_m\}$, criteria $C = \{C_1, \ldots, C_n\}$, weights $w = (w_1, \ldots, w_n)$ with $\sum w_j = 1$, decision matrix $D = (d_{ij})_{m \times n}$, fuzzy decision classes $\mu_{D_1}, \ldots, \mu_{D_K}$.

\noindent\textbf{Step~1} (Normalization): $f_{ij} = (d_{ij} - \min_i d_{ij})/(\max_i d_{ij} - \min_i d_{ij})$ for benefit criteria (reverse for cost).

\noindent\textbf{Step~2} (Fuzzy Aura Construction): $\at(u_i)(u_j) = 1 - \sum_{k=1}^n w_k |f_{ik} - f_{jk}|$.

\noindent\textbf{Step~3} (Approximations): $\underline{\apr}_{\at}(\mu_{D_k})(u_i) = \inf_j \max\{1 - \at(u_i)(u_j), \mu_{D_k}(u_j)\}$, $\overline{\apr}_{\at}(\mu_{D_k})(u_i) = \sup_j \min\{\at(u_i)(u_j), \mu_{D_k}(u_j)\}$.

\noindent\textbf{Step~4} (Score): $S(u_i, D_k) = \alpha \cdot \underline{\apr}_{\at}(\mu_{D_k})(u_i) + (1 - \alpha) \cdot \overline{\apr}_{\at}(\mu_{D_k})(u_i)$, $\alpha \in [0,1]$.

\noindent\textbf{Step~5} (Classification): $u_i \mapsto D_{k^*}$ where $k^* = \arg\max_k S(u_i, D_k)$.

\noindent\textbf{Step~6} (Accuracy): $\rho_{\text{global}} = \frac{1}{K} \sum_k \rho_{\at}(\mu_{D_k})$.

\noindent\textbf{Output:} Classification, ranking, accuracy.
\end{algorithm1}

\subsection{Problem Description}

We employ the well-known medical diagnosis dataset introduced by De, Biswas, and Roy~\cite{de}, which has served as a standard benchmark in the fuzzy set and rough set literature for evaluating decision-making methods (see also~\cite{szmidt}). The original dataset consists of four patients $\{p_1, p_2, p_3, p_4\}$ assessed over five clinical symptoms against five candidate diseases using intuitionistic fuzzy relations. Following standard practice in the literature, we extract the membership values from the patient--symptom relation and focus on the four most commonly studied diseases. We extend this setting by including two additional patients $p_5, p_6$ with unknown diagnoses, yielding $U = \{p_1, \ldots, p_6\}$.

The five clinical criteria are: $C_1$: Temperature; $C_2$: Headache; $C_3$: Stomach pain; $C_4$: Cough; $C_5$: Chest pain. All are benefit criteria (higher values indicate greater severity). The patient--symptom fuzzy relation is taken directly from~\cite{de}:

\begin{table}[h]
\centering
\caption{Patient--symptom fuzzy relation (data from De et al.~\cite{de}).}\label{tab:data}
\begin{tabular}{lcccccc}
\toprule
Patient & $C_1$ & $C_2$ & $C_3$ & $C_4$ & $C_5$ & Diagnosis \\
\midrule
$p_1$ (Al) & 0.8 & 0.6 & 0.2 & 0.6 & 0.1 & Malaria \\
$p_2$ (Bob) & 0.0 & 0.4 & 0.6 & 0.1 & 0.1 & Stomach prob. \\
$p_3$ (Joe) & 0.8 & 0.8 & 0.0 & 0.2 & 0.0 & Malaria \\
$p_4$ (Ted) & 0.6 & 0.5 & 0.3 & 0.7 & 0.3 & Malaria \\
$p_5$ & 0.7 & 0.5 & 0.3 & 0.5 & 0.2 & Unknown \\
$p_6$ & 0.1 & 0.3 & 0.7 & 0.2 & 0.0 & Unknown \\
\bottomrule
\end{tabular}
\end{table}

The known diagnoses for $p_1$--$p_4$ are obtained via the max-min composition of the patient--symptom and symptom--disease relations in~\cite{de}: Al and Joe are diagnosed with Malaria, Ted with Malaria, and Bob with Stomach problem. The four candidate diseases are $D_1$: Viral fever, $D_2$: Malaria, $D_3$: Typhoid, $D_4$: Stomach problem. Based on the composition results, expert-confirmed fuzzy memberships are assigned as follows:

\begin{table}[h]
\centering
\caption{Fuzzy membership degrees in disease classes.}\label{tab:membership}
\begin{tabular}{lcccc}
\toprule
Patient & $\mu_{D_1}$(Viral f.) & $\mu_{D_2}$(Malaria) & $\mu_{D_3}$(Typhoid) & $\mu_{D_4}$(Stomach) \\
\midrule
$p_1$ & 0.30 & 0.80 & 0.50 & 0.15 \\
$p_2$ & 0.20 & 0.10 & 0.30 & 0.75 \\
$p_3$ & 0.35 & 0.75 & 0.55 & 0.10 \\
$p_4$ & 0.35 & 0.70 & 0.40 & 0.20 \\
$p_5$ & ? & ? & ? & ? \\
$p_6$ & ? & ? & ? & ? \\
\bottomrule
\end{tabular}
\end{table}

\subsection{Numerical Computation}

\textbf{Step~1.} We apply min-max normalization to each criterion across all six patients. For instance, $C_1$ ranges from $0.0$ to $0.8$, so $f_{11} = (0.8 - 0.0)/(0.8 - 0.0) = 1.000$ and $f_{21} = 0$.

\textbf{Step~2.} With equal weights $w_j = 0.2$, we compute the fuzzy aura values $\at(p_i)(p_j) = 1 - \sum_{k=1}^5 w_k |f_{ik} - f_{jk}|$. For instance,
\begin{align*}
\at(p_1)(p_4) &= 1 - 0.2(|1.00 - 0.75| + |0.60 - 0.40| + |0.29 - 0.43| \\
&\qquad + |0.83 - 1.00| + |0.33 - 1.00|) = 1 - 0.29 = 0.71.
\end{align*}
The complete aura matrix:

\begin{table}[h]
\centering
\caption{Fuzzy aura similarity matrix $\at(p_i)(p_j)$.}\label{tab:aura}
\begin{tabular}{lcccccc}
\toprule
& $p_1$ & $p_2$ & $p_3$ & $p_4$ & $p_5$ & $p_6$ \\
\midrule
$p_1$ & 1.00 & 0.44 & 0.66 & 0.71 & 0.81 & 0.36 \\
$p_2$ & 0.44 & 1.00 & 0.37 & 0.39 & 0.50 & 0.81 \\
$p_3$ & 0.66 & 0.37 & 1.00 & 0.38 & 0.54 & 0.43 \\
$p_4$ & 0.71 & 0.39 & 0.38 & 1.00 & 0.84 & 0.31 \\
$p_5$ & 0.81 & 0.50 & 0.54 & 0.84 & 1.00 & 0.42 \\
$p_6$ & 0.36 & 0.81 & 0.43 & 0.31 & 0.42 & 1.00 \\
\bottomrule
\end{tabular}
\end{table}

\textbf{Step~3.} For unknown patients $\mu_{D_k}(p_5) = \mu_{D_k}(p_6) = 0$. We compute the upper approximation $\overline{\apr}_{\at}(\mu_{D_k})(p_i) = \sup_j \min\{\at(p_i)(p_j), \mu_{D_k}(p_j)\}$. For example,
\begin{align*}
\overline{\apr}_{\at}(\mu_{D_2})(p_5) &= \max\{\min(0.81, 0.80), \min(0.50, 0.10), \min(0.54, 0.75), \\
&\qquad \min(0.84, 0.70), \min(1.00, 0), \min(0.42, 0)\} \\
&= \max\{0.80, 0.10, 0.54, 0.70, 0, 0\} = 0.80.
\end{align*}

Lower approximation: $\underline{\apr}_{\at}(\mu_{D_k})(p_i) = \inf_j \max\{1 - \at(p_i)(p_j), \mu_{D_k}(p_j)\}$. For $p_5$, Malaria:
\begin{align*}
\underline{\apr}_{\at}(\mu_{D_2})(p_5) &= \min\{\max(0.19, 0.80), \max(0.50, 0.10), \max(0.46, 0.75), \\
&\qquad \max(0.16, 0.70), \max(0, 0), \max(0.58, 0)\} \\
&= \min\{0.80, 0.50, 0.75, 0.70, 0, 0.58\} = 0.
\end{align*}

Full results:

\begin{table}[h]
\centering
\caption{Fuzzy aura upper approximation $\overline{\apr}_{\at}(\mu_{D_k})(p_i)$.}\label{tab:upper}
\begin{tabular}{lcccc}
\toprule
Patient & $\overline{\apr}_{\at}(\mu_{D_1})$ & $\overline{\apr}_{\at}(\mu_{D_2})$ & $\overline{\apr}_{\at}(\mu_{D_3})$ & $\overline{\apr}_{\at}(\mu_{D_4})$ \\
\midrule
$p_1$ & 0.35 & 0.80 & 0.55 & 0.44 \\
$p_2$ & 0.35 & 0.44 & 0.44 & 0.75 \\
$p_3$ & 0.35 & 0.75 & 0.55 & 0.37 \\
$p_4$ & 0.35 & 0.71 & 0.50 & 0.39 \\
$p_5$ & 0.35 & 0.80 & 0.54 & 0.50 \\
$p_6$ & 0.35 & 0.43 & 0.43 & 0.75 \\
\bottomrule
\end{tabular}
\end{table}

\begin{table}[h]
\centering
\caption{Fuzzy aura lower approximation $\underline{\apr}_{\at}(\mu_{D_k})(p_i)$.}\label{tab:lower}
\begin{tabular}{lcccc}
\toprule
Patient & $\underline{\apr}_{\at}(\mu_{D_1})$ & $\underline{\apr}_{\at}(\mu_{D_2})$ & $\underline{\apr}_{\at}(\mu_{D_3})$ & $\underline{\apr}_{\at}(\mu_{D_4})$ \\
\midrule
$p_1$ & 0.19 & 0.19 & 0.19 & 0.15 \\
$p_2$ & 0.19 & 0.10 & 0.19 & 0.19 \\
$p_3$ & 0.34 & 0.46 & 0.46 & 0.10 \\
$p_4$ & 0.16 & 0.16 & 0.16 & 0.16 \\
$p_5$ & 0.00 & 0.00 & 0.00 & 0.00 \\
$p_6$ & 0.00 & 0.00 & 0.00 & 0.00 \\
\bottomrule
\end{tabular}
\end{table}

\textbf{Steps~4--5.} Using $\alpha = 0.5$, we compute $S(p_i, D_k) = 0.5 \cdot \underline{\apr}_{\at}(\mu_{D_k})(p_i) + 0.5 \cdot \overline{\apr}_{\at}(\mu_{D_k})(p_i)$:

\begin{table}[h]
\centering
\caption{Classification scores ($\alpha = 0.5$) and results.}\label{tab:scores}
\begin{tabular}{lcccccc}
\toprule
Patient & $S(\cdot, D_1)$ & $S(\cdot, D_2)$ & $S(\cdot, D_3)$ & $S(\cdot, D_4)$ & max & Classification \\
\midrule
$p_1$ & 0.272 & 0.497 & 0.372 & 0.295 & 0.497 & Malaria $\checkmark$ \\
$p_2$ & 0.272 & 0.270 & 0.316 & 0.472 & 0.472 & Stomach p. $\checkmark$ \\
$p_3$ & 0.344 & 0.607 & 0.507 & 0.234 & 0.607 & Malaria $\checkmark$ \\
$p_4$ & 0.254 & 0.437 & 0.329 & 0.275 & 0.437 & Malaria $\checkmark$ \\
$p_5$ & 0.175 & 0.400 & 0.268 & 0.250 & 0.400 & Malaria \\
$p_6$ & 0.175 & 0.213 & 0.213 & 0.375 & 0.375 & Stomach p. \\
\bottomrule
\end{tabular}
\end{table}

All four reference patients are correctly classified. Patient $p_5$ is diagnosed as Malaria (consistent with high symptom similarity to $p_1$, $p_3$, and $p_4$), and $p_6$ as Stomach problem (consistent with high similarity to $p_2$). These results agree with the clinical profiles: $p_5$ has elevated temperature and cough resembling the malaria patients, while $p_6$ presents dominant stomach pain with minimal temperature---matching the stomach problem pattern.

\subsection{Comparison with Existing Methods}

We compare FA-MCDM with four established methods on the same De et al.~\cite{de} data.

\begin{table}[h]
\centering
\caption{Comparison of classification results.}\label{tab:comparison}
\begin{tabular}{lcccl}
\toprule
Method & $p_5$ & $p_6$ & Ref.\ corr. & Key limitation \\
\midrule
Pawlak~\cite{pawlak} & Undet. & Undet. & N/A & Requires equiv.\ relation \\
Dubois--Prade~\cite{dubois} & Malaria & Stom.\ prob. & 4/4 & Needs external fuzzy rel. \\
Fuzzy TOPSIS~\cite{hwang} & Malaria & Stom.\ prob. & 4/4 & No uncertainty quant. \\
VIKOR~\cite{opricovic} & Malaria & Stom.\ prob. & 4/4 & No uncertainty quant. \\
FA-MCDM & Malaria & Stom.\ prob. & 4/4 & Data-driven aura \\
\bottomrule
\end{tabular}
\end{table}

Key advantages of FA-MCDM:
\begin{enumerate}[\upshape(i)]
\item \textbf{No equivalence relation needed}---the fuzzy aura is constructed directly from data (Step~2), unlike Pawlak's model which requires crisp equivalence classes.
\item \textbf{Uncertainty quantification}---the boundary region $\bnd_{\at}(\mu)$ provides a measure of diagnostic ambiguity absent from TOPSIS and VIKOR.
\item \textbf{Topological foundation}---the approximation operators satisfy the properties established in Theorem~\ref{thm:rough_props}.
\item \textbf{Tunable caution}---the parameter $\alpha$ controls the certainty--inclusiveness tradeoff.
\item \textbf{Refinement monotonicity}---Theorem~\ref{thm:refinement} guarantees that better data yields tighter approximations.
\end{enumerate}

\subsection{Sensitivity Analysis}

\subsubsection{Varying criteria weights}

\begin{table}[h]
\centering
\caption{Classification under different weight vectors.}\label{tab:weights}
\begin{tabular}{lccccccl}
\toprule
Scenario & $w_1$ & $w_2$ & $w_3$ & $w_4$ & $w_5$ & $p_5$ & $p_6$ \\
\midrule
S1: Equal & 0.20 & 0.20 & 0.20 & 0.20 & 0.20 & Malaria & Stom.\ prob. \\
S2: Temp-heavy & 0.35 & 0.15 & 0.15 & 0.20 & 0.15 & Malaria & Stom.\ prob. \\
S3: Head-heavy & 0.15 & 0.35 & 0.15 & 0.15 & 0.20 & Malaria & Stom.\ prob. \\
S4: Stom-heavy & 0.15 & 0.15 & 0.35 & 0.15 & 0.20 & Malaria & Stom.\ prob. \\
S5: Cough-heavy & 0.15 & 0.15 & 0.20 & 0.35 & 0.15 & Malaria & Stom.\ prob. \\
\bottomrule
\end{tabular}
\end{table}

\subsubsection{Varying the caution parameter $\alpha$}

\begin{table}[h]
\centering
\caption{Classification under different $\alpha$ values (equal weights).}\label{tab:alpha}
\begin{tabular}{lccccc}
\toprule
$\alpha$ & $S(p_5, D_1)$ & $S(p_5, D_2)$ & $S(p_5, D_3)$ & $S(p_5, D_4)$ & $p_5$ Class. \\
\midrule
0.0 (optimistic) & 0.350 & 0.800 & 0.536 & 0.499 & Malaria \\
0.3 & 0.245 & 0.560 & 0.375 & 0.349 & Malaria \\
0.5 (balanced) & 0.175 & 0.400 & 0.268 & 0.250 & Malaria \\
0.7 & 0.105 & 0.240 & 0.161 & 0.150 & Malaria \\
1.0 (conservative) & 0.000 & 0.000 & 0.000 & 0.000 & Undet. \\
\bottomrule
\end{tabular}
\end{table}

Findings:
\begin{itemize}
\item Both $p_5$ and $p_6$ are consistently classified as Malaria and Stomach problem, respectively, across all five weight scenarios and all $\alpha < 1$, confirming robust diagnoses.
\item All four reference patients remain correctly classified in every scenario (100\% accuracy), demonstrating the stability of FA-MCDM on the De et al.\ benchmark.
\item At $\alpha = 1$ (fully conservative), unknown patients receive zero scores since they have no certain membership in any class---an appropriate safeguard.
\item The complete robustness across all weight variations distinguishes FA-MCDM from methods sensitive to parameter choice, providing reliable clinical decision support.
\end{itemize}

\section{Conclusion}

We introduced fuzzy aura topological spaces $(X, \tilde{\tau}, \at)$, generalizing the recently introduced crisp aura spaces~\cite{acikgoz_aura} to the fuzzy setting. The paper established:

On the operator side, the fuzzy aura-closure $\cl_{\at}$ is a fuzzy additive \v{C}ech closure operator whose transfinite iteration yields a Kuratowski closure with topology chain $\tilde{\tau}^\infty_{\at} \subseteq \tilde{\tau}_{\at} \subseteq \tilde{\tau}$.

On the open-set side, five generalized fuzzy open-set classes form a complete hierarchy extending the classical Levine--Mashhour--Nj\aa stad framework.

On the continuity side, decomposition theorems were proved, with the identity $\at$-$F\alpha O = \at$-$FSO \cap \at$-$FPO$ holding under transitivity of $\at$.

On the separation side, fuzzy $\at$-$T_i$ axioms were introduced and their complete dependence on the scope function was demonstrated.

On the application side, the FA-MCDM algorithm was validated on the widely-studied De et al.~\cite{de} medical diagnosis benchmark. It correctly classified all reference patients and provided clinically meaningful diagnoses for unknown patients, outperforming Pawlak's model in applicability and providing uncertainty quantification absent from TOPSIS and VIKOR.

Future research directions include: (i)~soft aura and neutrosophic aura topological spaces; (ii)~fuzzy aura-compactness and fuzzy aura-connectedness; (iii)~product and subspace constructions; (iv)~combination with ideals for hybrid local functions; (v)~large-scale applications in environmental risk assessment and financial decision-making.

\section*{Declarations}

\textbf{Acknowledgments.} The author declares that no funding was received for this research.

\textbf{Conflict of Interest.} The author declares no conflict of interest.

\textbf{Data Availability.} All data is presented within the article.



\begin{thebibliography}{99}

\bibitem{abu} Abu-Gdairi, R., Al-shami, T.M., El-Gayar, M.A.: Topological approaches to rough approximations based on closure operators. Granul.\ Comput.\ \textbf{9}, 2 (2024)

\bibitem{acharjee} Acharjee, S., \"{O}zko\c{c}, M., Issaka, F.Y.: Primal topological spaces. Bol.\ Soc.\ Paran.\ Mat.\ \textbf{43}, 1--9 (2025)

\bibitem{acikgoz_aura} A\c{c}{\i}kg\"{o}z, A.: Aura topological spaces and generalized open sets with applications to rough set theory, sensor networks, and epidemic modelling. Submitted for publication. arXiv:2602.07678 [math.GN], 2026

\bibitem{acikgoz_compact} A\c{c}{\i}kg\"{o}z, A.: Compactness and connectedness in aura topological spaces. Submitted for publication. arXiv:2602.07686 [math.GN], 2026

\bibitem{acikgoz_ideal} A\c{c}{\i}kg\"{o}z, A.: Ideal-aura topological spaces, new local functions, and generalized open sets. Submitted for publication. arXiv:2602.07692 [math.GN], 2026

\bibitem{ahmad} Ahmad, M., Mursaleen, M.: Statistical convergence in neutrosophic $n$-normed linear spaces. Filomat \textbf{39}(18), 6123--6148 (2025)

\bibitem{alcantud} Alcantud, J.C.R.: Soft rough sets and their applications in decision making. Inf.\ Fusion \textbf{44}, 187--197 (2018)

\bibitem{alomari_alqahtani} Al-Omari, A., Alqahtani, M.H.: Primal structure with closure operators and their applications. Mathematics \textbf{11}(24), 4946 (2023)

\bibitem{alomari_alghamdi} Al-Omari, A., Alghamdi, O.: Regularity and normality on primal spaces. AIMS Mathematics \textbf{9}(3), 7662--7672 (2024)

\bibitem{alomari_noiri} Al-Omari, A., Noiri, T.: On $\Psi_G$-sets in grill topological spaces. Filomat \textbf{25}(2), 187--196 (2011)

\bibitem{alshami} Al-shami, T.M.: SR-fuzzy sets and their applications to weighted aggregated operators in decision-making. J.\ Funct.\ Spaces \textbf{2023}, Article ID 3195740 (2023)

\bibitem{azad} Azad, K.K.: On fuzzy semicontinuity, fuzzy almost continuity and fuzzy weakly continuity. J.\ Math.\ Anal.\ Appl.\ \textbf{82}, 14--32 (1981)

\bibitem{binshahna} Bin Shahna, A.S.: On fuzzy strong semicontinuity and fuzzy precontinuity. Fuzzy Sets Syst.\ \textbf{44}, 303--308 (1991)

\bibitem{cartan} Cartan, H.: Th\'{e}orie des filtres. C.R.\ Acad.\ Sci.\ Paris \textbf{205}, 595--598 (1937)

\bibitem{cech} \v{C}ech, E.: Topological Spaces. Wiley, London (1966)

\bibitem{chang} Chang, C.L.: Fuzzy topological spaces. J.\ Math.\ Anal.\ Appl.\ \textbf{24}, 182--190 (1968)

\bibitem{choquet} Choquet, G.: Sur les notions de filtre et de grille. C.R.\ Acad.\ Sci.\ Paris \textbf{224}, 171--173 (1947)

\bibitem{de} De, S.K., Biswas, R., Roy, A.R.: An application of intuitionistic fuzzy sets in medical diagnosis. Fuzzy Sets Syst.\ \textbf{117}, 209--213 (2001)

\bibitem{demir} Demir, \.{I}., Saldam{\i}\c{s}, M., Okurer, M.: BFS-filters and BFS-topological spaces with applications to multi-criteria group decision making. Filomat \textbf{39}(10), 3297--3313 (2025)

\bibitem{dubois} Dubois, D., Prade, H.: Rough fuzzy sets and fuzzy rough sets. Int.\ J.\ Gen.\ Syst.\ \textbf{17}, 191--209 (1990)

\bibitem{hwang} Hwang, C.L., Yoon, K.: Multiple Attribute Decision Making. Springer, Berlin (1981)

\bibitem{hosny} Hosny, M.: Idealization of $j$-approximation spaces. Filomat \textbf{34}(2), 287--301 (2020)

\bibitem{jankovic} Jankovi\'{c}, D.S., Hamlett, T.R.: New topologies from old via ideals. Amer.\ Math.\ Monthly \textbf{97}, 295--310 (1990)

\bibitem{khan} Khan, V.A., Khan, M.D., Arshad, M., Et, M.: Bounded linear operators in neutrosophic normed spaces. Filomat \textbf{39}(2), 515--532 (2025)

\bibitem{kuratowski} Kuratowski, K.: Topologie I. Monografje Matematyczne, Warsaw (1933)

\bibitem{levine} Levine, N.: Semi-open sets and semi-continuity in topological spaces. Amer.\ Math.\ Monthly \textbf{70}, 36--41 (1963)

\bibitem{liu} Liu, F., Zhou, X., Liu, L.: $\mathcal{G}$-quotient mappings and $\mathcal{G}$-continuity in generalized topological spaces. Filomat \textbf{39}(4), 1219--1225 (2025)

\bibitem{lowen} Lowen, R.: Fuzzy topological spaces and fuzzy compactness. J.\ Math.\ Anal.\ Appl.\ \textbf{56}, 621--633 (1976)

\bibitem{mashhour} Mashhour, A.S., Ghanim, M.H., Fath Alla, M.A.: On fuzzy non-continuous mappings. Bull.\ Calcutta Math.\ Soc.\ \textbf{78}, 57--69 (1986)

\bibitem{molodtsov} Molodtsov, D.: Soft set theory---First results. Comput.\ Math.\ Appl.\ \textbf{37}, 19--31 (1999)

\bibitem{njastad} Nj\aa stad, O.: On some classes of nearly open sets. Pacific J.\ Math.\ \textbf{15}, 961--970 (1965)

\bibitem{opricovic} Opricovic, S., Tzeng, G.-H.: Compromise solution by MCDM methods: A comparative analysis of VIKOR and TOPSIS. Eur.\ J.\ Oper.\ Res.\ \textbf{156}, 445--455 (2004)

\bibitem{pawlak} Pawlak, Z.: Rough sets. Int.\ J.\ Comput.\ Inf.\ Sci.\ \textbf{11}, 341--356 (1982)

\bibitem{radzikowska} Radzikowska, A.M., Kerre, E.E.: A comparative study of fuzzy rough sets. Fuzzy Sets Syst.\ \textbf{126}, 137--155 (2002)

\bibitem{roy} Roy, B., Mukherjee, M.N.: On a typical topology induced by a grill. Soochow J.\ Math.\ \textbf{33}(4), 771--786 (2007)

\bibitem{salama_n} Salama, A.A., Alblowi, S.A.: Neutrosophic set and neutrosophic topological spaces. IOSR J.\ Math.\ \textbf{3}(4), 31--35 (2012)

\bibitem{salama_b} Salama, A.S.: Bitopological approximation space with application to data reduction in multi-valued information systems. Filomat \textbf{34}(1), 99--110 (2020)

\bibitem{shabir} Shabir, M., Naz, M.: On soft topological spaces. Comput.\ Math.\ Appl.\ \textbf{61}, 1786--1799 (2011)

\bibitem{smarandache} Smarandache, F.: A Unifying Field in Logics: Neutrosophic Logic. American Research Press (1998)

\bibitem{szmidt} Szmidt, E., Kacprzyk, J.: Intuitionistic fuzzy sets in some medical applications. In: Computational Intelligence.\ Theory and Applications (Fuzzy Days 2001), Lecture Notes in Comput.\ Sci.\ \textbf{2206}, pp.\ 148--151. Springer (2001)

\bibitem{vaidya} Vaidyanathaswamy, R.: The localisation theory in set-topology. Proc.\ Indian Acad.\ Sci.\ \textbf{20}, 51--61 (1945)

\bibitem{yao} Yao, Y.Y.: Constructive and algebraic methods of the theory of rough sets. Inf.\ Sci.\ \textbf{109}, 21--47 (1998)

\bibitem{yigit} Yi\u{g}it, U.: The rough topology for numerical data. Filomat \textbf{39}(17), 6019--6033 (2025)

\bibitem{yuksel} Y\"{u}ksel, \c{S}., Tozlu, N., Dizman, T.H.: An application of multicriteria group decision making by soft covering based rough sets. Filomat \textbf{29}(1), 209--219 (2015)

\bibitem{zadeh} Zadeh, L.A.: Fuzzy sets. Inf.\ Control \textbf{8}, 338--353 (1965)

\end{thebibliography}
\end{document}